\def\R{\mathbb{R}}
\def\C{\mathbb{C}}
\newcommand{\expn}{\operatorname{e}}
\newcommand{\orth}{\operatorname{orth}}
\newcommand{\vect}{\operatorname{vec}}
\newcommand{\beq}{\begin{equation}}
\newcommand{\eeq}{\end{equation}}
\newcommand {\mat}      [1] {\left[\begin{array}{#1}}
\newcommand {\rix}          {\end{array}\right]}
\newcommand {\smat}      [1] {\left[\begin{smallmatrix}{#1}}
\newcommand {\srix}          {\end{smallmatrix}\right]}
\newcommand {\s}      [1] {\begin{smallmatrix}{#1}}
\newcommand {\se}          {\end{smallmatrix}}
\newcommand{\trace}{\operatorname{tr}}
\newcommand{\hA}{\ensuremath{\hat{A}}}
\newcommand{\hB}{\ensuremath{\hat{B}}}
\newcommand{\hC}{\ensuremath{\hat{C}}}
\newcommand{\hN}{\ensuremath{\hat{N}}}
\newcommand{\tB}{\ensuremath{\tilde{B}}}
\newcommand{\tC}{\ensuremath{\tilde{C}}}
\newcommand{\tN}{\ensuremath{\tilde{N}}}
\newtheorem{defn}{Definition}[section]
\newtheorem{remark}{Remark}
\newtheorem{lem}[defn]{Lemma}
\newtheorem{kor}[defn]{Corollary}
\newtheorem{thm}[defn]{Theorem}
\newcommand{%
	\tikzsetnextfilename{figure/}%
	\input{figure/.tikz}%
}[1]{%
	\tikzsetnextfilename{figure/#1}%
	\input{figure/#1.tikz}%
}
\def\addlegendimage{\csname pgfplots@addlegendimage\endcsname}
\newlength\fheight
\newlength\fwidth
\title{Bilinear systems -- A new link to $\mathcal H_2$-norms, relations to stochastic systems and further properties}
\author{Martin Redmann\thanks{Martin Luther University Halle-Wittenberg, Institute of Mathematics, Theodor-Lieser-Str. 5, 06120 Halle (Saale), Germany, Email: {\tt 
martin.redmann@mathematik.uni-halle.de}.  Financial support by the DFG via Research Unit FOR 2402 is gratefully acknowledged.}}
\begin{document}

\maketitle

\begin{abstract}
In this paper, we prove several new results that give new insights into bilinear systems. We discuss conditions for asymptotic stability using probabilistic arguments. 
Moreover, we provide a global characterization of reachability in bilinear systems based on a certain Gramian. Reachability energy estimates using the same Gramian have only been local so far.
The main result of this paper, however, is a new link between the output error and the $\mathcal H_2$-error of two bilinear systems. This result has several consequences in the field of model order reduction. It explains why 
$\mathcal H_2$-optimal model order reduction leads to good approximations in terms of the output error. Moreover, output errors based on the $\mathcal H_2$-norm can now be proved for balancing related model
order reduction schemes in this paper. All these new results are based on a Gronwall lemma for matrix differential equations that is established here.
\end{abstract}

\begin{keywords}
Model order reduction, bilinear systems, $\mathcal H_2$-error bounds, asymptotic stability, reachability, stochastic systems.  
\end{keywords}

\begin{AMS}
 93A15, 93B05, 93C10, 93D20, 93E03. 
\end{AMS}

\pagestyle{myheadings}
\thispagestyle{plain}
\markboth{M. REDMANN}{BILINEAR SYSTEMS -- LINKS TO $\mathcal H_2$-NORMS AND STOCHASTIC SYSTEMS AND MORE}


\section{Introduction}

In this paper, we consider bilinear control systems that have applications in various fields \cite{brunietal,Mohler,rugh1981nonlinear}. These systems are of the form
\begin{equation}\label{sys:original}
\Sigma:\left\{ \begin{aligned}
\dot x(t) &= Ax(t) +  Bu(t) + \sum_{k=1}^m N_k x(t) u_k(t),\quad x(0) = x_0,\\
y(t) &= Cx(t),\quad t\geq 0,
\end{aligned}\right.
\end{equation}
where $A, N_k \in \R^{n\times n}$, $B \in \R^{n\times m}$, and $C \in \R^{p\times n}$ are constant matrices. The vectors $x$, $u=(u_1\ u_2\, \ldots\, u_m)^T$ and $y$ 
denote the state, the control input and the quantity of interest (output vector), respectively. The solution of the state equation in (\ref{sys:original}) is denoted by $x(t, x_0, B)$ to indicate the dependence
on the initial condition $x_0$ and the input matrix $B$. The solution $x(t, x_0, 0)$ to the homogeneous state equation is sometimes considered with an initial time $s$ different from zero and is then written 
as $x_{(s)}(t, x_0, 0)$. We also assume that the matrix $A$ is Hurwitz, meaning 
that $\sigma(A) \subset \C_{-}=\{z\in\mathbb C:\quad \Re(z)<0\}$, where $\sigma(\cdot)$ denotes the spectrum of a matrix and $\Re(\cdot)$ represents the real part of a complex number. Furthermore, let 
$u\in L^2$, i.e., \begin{align*}
                   \left\|u\right\|_{L^2}^2:= \int_0^\infty \left\|u(s)\right\|^2_2 ds = \int_0^\infty u^T(s) u(s) ds <\infty.
                  \end{align*}
System (\ref{sys:original}) 
can, e.g., represent a spatially discretized partial differential equation. Then, $n$ is usually large and solving (\ref{sys:original}) becomes computationally expensive, in particular if the system has to be evaluated for many 
controls $u$. Therefore, model order reduction (MOR) is vital aiming to replace the original large scale system by a system of small order in order to reduce complexity. We introduce such a reduced order model (ROM) as follows:
\begin{equation}\label{sys:reduced}
\hat\Sigma:\left\{ \begin{aligned}
\dot {\hat x}(t) &= \hat A \hat x(t) +  \hat Bu(t) + \sum_{k=1}^m \hat N_k \hat x(t) u_k(t), \quad \hat x(0) = \hat x_0,\\
\hat y(t) &= \hat C\hat x(t),\quad t\geq 0,
\end{aligned}\right.
\end{equation}
where  $\hat A, \hat N_k\in \R^{r\times r}$, $\hat B \in \R^{r\times m}$, and $\hat C \in \R^{p\times r}$ with $r \ll n$. In order to determine the quality of the reduction, it is essential to find a bound $\mathcal E\geq 0$
which estimates the error between $y$ and $\hat y$, e.g., as follows 
\begin{align}\label{abstractbound}
 \sup_{t\geq 0}\left\|y(t)-\hat y(t)\right\|_2 \leq \mathcal E f(u)
\end{align}
assuming zero initial conditions, where $f$ is a suitable function. \smallskip

In this paper, we first find an estimate for the homogeneous state variable in (\ref{sys:original}), which is based on a new Gronwall lemma for matrix differential equations. In the upper bound of this estimate, 
the dependence on the control $u$ is decoupled allowing to analyze several properties of system (\ref{sys:original}). We are hence able to show a link between deterministic bilinear systems and linear stochastic systems 
which means that results from the stochastic case can be transferred to bilinear systems. As a consequence, we show asymptotic stability for bilinear systems under the above assumptions on $u$ and $A$ using probabilistic arguments.\smallskip

As the main result of this paper, we will moreover prove that there is an $f$ such that $\mathcal E = \left\|\Sigma-\hat \Sigma\right\|_{\mathcal H_2}$
in (\ref{abstractbound}), i.e., the output error between (\ref{sys:original}) and (\ref{sys:reduced}) can be bounded by an expression depending on the $\mathcal H_2$-error between both bilinear systems.
This connection has been an open problem for a long time. This is an important new finding since it finally answers 
the question why $\mathcal H_2$-optimal MOR techniques like the bilinear iterative rational Krylov algorithm \cite{morBenB12b, flagg2015multipoint} lead to good approximations. Moreover, it is possible to
find output error bounds based on the $\mathcal H_2$-error for balancing related MOR schemes 
like balanced truncation (BT) \cite{typeIBT,morBenD11} and singular perturbation approximation (SPA) \cite{hartmann}, which could not be established so far. We will provide these bounds
that are based on the analysis of $\mathcal E$ in the context of MOR for stochastic equations \cite{redmannbenner,redmannspa2,BTtyp2EB,redSPA} again showing the connection between bilinear and stochastic systems.
The new error bounds then allow us to point out the situations in which balancing related methods perform well for bilinear systems. \smallskip 

In the case of BT, a different kind of bound has already been 
found in \cite{beckerhartmann}. Using a different reachability Gramian in comparison to the work in \cite{typeIBT,morBenD11, hartmann}, error bounds for BT and SPA could be achieved \cite{redstochbil, redmannstochbilspa}. 
We will also discuss reachability in bilinear systems. Using the output error bound established in this paper, an estimate based on the 
reachability Gramian proposed in \cite{typeIBT} is derived. This estimate allows us to identify states in the system dynamics that require a larger amount of energy to be reached. Previous 
characterizations of reachability based on the same Gramian have only been local so far \cite{morBenD11, enefungray98}.

\section{Fundamental solutions, solution representations and Gronwall lemma}

We briefly recall the concept of fundamental solutions to deterministic bilinear systems. Furthermore, we state their solution representations. Those will be essential for the error analysis 
for bilinear systems. We introduce the fundamental solution $\Phi_u(t,s)$, $s\leq  t$, of the state equation in (\ref{sys:original}) as a matrix-valued function solving
\begin{align}\label{fun_sol_bil}
 \Phi_u(t,s) = I +\int_s^t A \Phi_u(\tau,s) d\tau + \sum_{k=1}^m \int_s^t N_k \Phi_u(\tau,s) u_k(\tau) d\tau.
\end{align}
For initial time $s=0$, we simply write $\Phi_u(t):=\Phi_u(t,0)$. In order to distinguish between homogeneous state variables ($B=0$) with different initial times, we introduce $x_{(s)}(\cdot, x_0, 0)$ 
as the homogeneous solution to the state equation (\ref{sys:original}) with initial time $s\geq 0$. The subscript is omitted if $s=0$. By multiplying (\ref{fun_sol_bil}) with $x_0$ from the right, it can be seen that 
the solution to the homogeneous system with initial time $s\geq 0$ is
\begin{align*}
x_{(s)}(t,x_0, 0) &= \Phi_u(t,s) x_0, \quad x_{(s)}(s, x_0, 0)=x_0.
\end{align*}
The fundamental solution moreover allows us to find an explicit expression for the solution to the bilinear state equation in (\ref{sys:original}) for general $B$. It is given in 
the next theorem.
\begin{thm}\label{sol_rep}
Let $x(t, x_0, B)$, $t\geq 0$, be the solution to the state equation in (\ref{sys:original}). Then, it has the following representation:
\begin{align*}
 x(t, x_0, B) = \Phi_u(t) x_0 + \int_0^t \Phi_u(t,s) B u(s) ds,
\end{align*}
where $\Phi_u$ is fundamental solution to the bilinear system.
\end{thm}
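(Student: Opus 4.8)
The plan is to verify directly that the right-hand side, which I denote $\tilde x(t) := \Phi_u(t) x_0 + \int_0^t \Phi_u(t,s) B u(s)\, ds$, solves the state equation in \gref{sys:original}, and then conclude by uniqueness of solutions. First I would check the initial condition: at $t=0$ the integral term is empty and $\Phi_u(0)=\Phi_u(0,0)=I$ by \gref{fun_sol_bil}, hence $\tilde x(0)=x_0$.

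To avoid differentiating the fundamental solution (which is only absolutely continuous in its first argument because $u\in L^2$), I would work with the integral form of the state equation and show
\begin{align*}
\tilde x(t) = x_0 + \int_0^t \Bigl( A\tilde x(\tau) + B u(\tau) + \sum_{k=1}^m N_k \tilde x(\tau) u_k(\tau)\Bigr) d\tau .
\end{align*}
Here one substitutes the definition of $\tilde x$, applies \gref{fun_sol_bil} (with $s=0$) to $\Phi_u(\tau)x_0$ and (with general $s$) to $\Phi_u(\tau,s)Bu(s)$, and the decisive manipulation is to interchange the order of integration in the resulting double integrals, rewriting $\int_0^t \int_s^t (\cdots)\, d\tau\, ds$ as $\int_0^t \int_0^\tau (\cdots)\, ds\, d\tau$. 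After this swap, the $A$-term, the $Bu$-term and the $N_k$-terms reassemble exactly into the claimed integral identity. Since the right-hand side of \gref{sys:original} is (locally) Lipschitz in $x$, a Gronwall argument gives uniqueness, so $\tilde x = x(\cdot,x_0,B)$. As an alternative one could differentiate: \gref{fun_sol_bil} yields $\tfrac{\partial}{\partial t}\Phi_u(t,s)=A\Phi_u(t,s)+\sum_{k=1}^m N_k\Phi_u(t,s)u_k(t)$ for a.e.\ $t$ and $\Phi_u(t,t)=I$, so the Leibniz rule gives $\dot{\tilde x}(t)=A\tilde x(t)+Bu(t)+\sum_{k=1}^m N_k\tilde x(t)u_k(t)$ a.e., and again uniqueness closes the argument.

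The main obstacle I anticipate is purely technical: justifying either the Fubini interchange or the differentiation under the integral sign in the low-regularity setting $u\in L^2$, where $u$ need not be continuous and $(t,s)\mapsto\Phi_u(t,s)$ is only absolutely continuous in $t$. This requires local boundedness of $\Phi_u(t,s)$ on $\{0\le s\le t\le T\}$ — which itself follows from applying Gronwall's inequality to \gref{fun_sol_bil} — together with measurability and integrability of the integrands on $[0,T]^2$; with those in hand, both the Fubini step and the Leibniz step are routine, and the remaining algebra is a straightforward bookkeeping of the three groups of terms.
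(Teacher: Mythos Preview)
Your proposal is correct, but the paper takes a slightly different and shorter route. Instead of working with $\Phi_u(t,s)$ inside the integral and then either swapping the order of integration or invoking the Leibniz rule, the paper first uses the flow property $\Phi_u(t,s)=\Phi_u(t)\Phi_u^{-1}(s)$ to factor the $t$-dependence out of the integral, writing the candidate solution as the product $\Phi_u(t)\,g(t)$ with $g(t):=x_0+\int_0^t \Phi_u^{-1}(s)Bu(s)\,ds$. A single application of the product rule then gives $\dot x(t)=\bigl(A+\sum_k N_k u_k(t)\bigr)\Phi_u(t)g(t)+\Phi_u(t)\Phi_u^{-1}(t)Bu(t)$, which is exactly the state equation. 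This sidesteps both the Fubini interchange and the differentiation under the integral sign that you flag as the main technical obstacles; the price is that one has to know (or establish) invertibility of $\Phi_u(t)$ and the identity $\Phi_u(t,s)=\Phi_u(t)\Phi_u^{-1}(s)$. Your direct verification has the advantage of not relying on those structural facts and works purely from \gref{fun_sol_bil}, at the cost of the measure-theoretic justifications you correctly anticipate.
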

\begin{proof}
The identity for $x$ is obtained by applying the product rule to $\Phi_u(t) g(t)$, where $g(t) := x_0 + \int_0^t \Phi_u^{-1}(s) B u(s) ds$ exploiting that $\Phi_u(t,s) = \Phi_u(t) \Phi_u^{-1}(s)$. 
\end{proof}\\
Since $\Phi_u$ depends on the control $u$, the error bound analysis for bilinear systems is significantly harder than the one for deterministic linear systems ($N_k=0$), where the fundamental solution 
becomes $\Phi_u(t, s) = \expn^{A(t-s)}$.
That is why we will prove a Gronwall type result which allows to find an upper bound for a quadratic form of the fundamental solution, in which the dependence of $u$ is decoupled. Such a 
Gronwall lemma for matrix differential equations will be established in the following.\smallskip

Given two matrices $K$ and $L$, we write $K \leq L$ below if $L-K$ is a symmetric positive semidefinite matrix. Moreover, we introduce the vector of control components with a non-zero $N_k$, $k\in\{1, \ldots, m\}$: 
\begin{align}\label{uzero}
u^{0}=(u_1^{0}\ u_2^{0}\, \ldots\, u_m^{0})^T\quad \text{with}\quad u_k^{0} \equiv \begin{cases}
  0,  & \text{if }N_k = 0\\
  u_k, & \text{else}.
\end{cases}
\end{align}
We provide three vital lemmas, which form the basis for the results in this paper. Their proofs are moved to the appendix in order to improve the readability of the paper.
\begin{lem}\label{lem1}
Let $x_{(s)}(t, x_0, 0)$, $t\geq s\geq 0$, denote the solution to the homogeneous bilinear equation \begin{align}\label{hom_bil_eq}
\dot x(t) &= Ax(t) + \sum_{k=1}^m N_k x(t) u_k(t),\quad x(s)=x_0.
\end{align}
Then, the function $x_{(s)}(t, x_0, 0) x_{(s)}^T(t, x_0, 0)$, $t\geq s\geq 0$, satisfies the following matrix differential inequality:
\begin{align}\label{matrix_ineq}
 \dot X(t) \leq A X(t) + X(t) A^T +\sum_{k=1}^m N_k X(t) N_k^T + X(t) \left\|u^{0}(t)\right\|_2^2,
\end{align}
where $X(s)=x_0 x_0^T$.
\end{lem}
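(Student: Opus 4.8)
The plan is to differentiate the rank-one matrix-valued function $X(t) := x_{(s)}(t, x_0, 0)\, x_{(s)}^T(t, x_0, 0)$ along the homogeneous dynamics and then bound the cross terms by completing the square. Write $x(t) := x_{(s)}(t, x_0, 0)$ for brevity. Since $x(\cdot) = \Phi_u(\cdot,s) x_0$ is absolutely continuous by \gref{fun_sol_bil}, the same holds for $X$ on compact intervals, so the product rule is valid for almost every $t \geq s$. Inserting \gref{hom_bil_eq} and using $X(t) = X^T(t)$ gives
\begin{align*}
\dot X(t) = \dot x(t)\,x^T(t) + x(t)\,\dot x^T(t) = A X(t) + X(t) A^T + \sum_{k=1}^m u_k(t)\bigl(N_k X(t) + X(t) N_k^T\bigr).
\end{align*}
Because the $k$-th summand vanishes when $N_k = 0$ and equals $u_k^{0}(t)\bigl(N_k X(t) + X(t) N_k^T\bigr)$ when $N_k \neq 0$, one may replace every $u_k$ by its truncation $u_k^{0}$, and correspondingly $\sum_{k=1}^m (u_k^{0}(t))^2 = \|u^{0}(t)\|_2^2$.

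With this reduction, I would observe that \gref{matrix_ineq} is equivalent to
\begin{align*}
\sum_{k=1}^m \Bigl( N_k X(t) N_k^T - u_k^{0}(t)\bigl(N_k X(t) + X(t) N_k^T\bigr) + (u_k^{0}(t))^2 X(t) \Bigr) \geq 0,
\end{align*}
and the key step is the algebraic identity that the $k$-th summand factors exactly as $\bigl(N_k - u_k^{0}(t) I\bigr) X(t) \bigl(N_k - u_k^{0}(t) I\bigr)^T$, which uses only that $X(t)$ is symmetric and $u_k^{0}(t)$ is a scalar. Since $X(t) = x(t) x^T(t)$ is symmetric positive semidefinite, each factored summand is symmetric positive semidefinite, hence so is the sum, which yields \gref{matrix_ineq}. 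The initial value $X(s) = x_0 x_0^T$ is immediate from $x_{(s)}(s, x_0, 0) = x_0$.

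There is essentially no analytic obstacle here; the one point that takes a moment to spot is the "completing the square" identity $N_k X N_k^T - u_k^{0}(N_k X + X N_k^T) + (u_k^{0})^2 X = (N_k - u_k^{0} I)\, X\, (N_k - u_k^{0} I)^T$, after which positive semidefiniteness of the rank-one matrix $X(t)$ does all the work. The only technical care needed is to interpret \gref{matrix_ineq} in the almost-everywhere (equivalently, integrated) sense, since $u$ is merely assumed to be in $L^2$.
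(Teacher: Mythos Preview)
Your proof is correct and follows essentially the same route as the paper: differentiate $X(t)=x(t)x^T(t)$ via the product rule, replace $u_k$ by $u_k^{0}$, and then complete the square to exhibit the defect as $\sum_{k}(N_k - u_k^{0}(t)I)X(t)(N_k - u_k^{0}(t)I)^T\geq 0$. The only differences are cosmetic---you phrase the final step as a factorization of the difference between the two sides, whereas the paper writes the same identity inline---and you add the (appropriate) remark about the almost-everywhere interpretation for $u\in L^2$, which the paper omits.
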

\begin{proof}
See Appendix \ref{secprooflem1}.
\end{proof}\\
We now find a representation for the respective equality.
\begin{lem}\label{lem2}
Let $\bar Z(t, Z_0)$, $t\geq 0$, satisfy 
\begin{align}\label{matrix_eq_sinu}
 \dot {\bar Z}(t) = A \bar Z(t) + \bar Z(t) A^T +\sum_{k=1}^m N_k \bar Z(t) N_k^T ,\quad \bar Z(0) = Z_0\geq 0.
\end{align}
Then, the function $\exp\left\{\int_s^t \left\|u^{0}(s)\right\|_2^2 ds\right\} \bar Z(t-s, Z_0)$, $t\geq s\geq 0$, solves the following matrix differential equation:
\begin{align}\label{matrix_eq}
 \dot Z(t) = A Z(t) + Z(t) A^T +\sum_{k=1}^m N_k Z(t) N_k^T + Z(t) \left\|u^{0}(t)\right\|_2^2,
\end{align}
where $Z(s)=Z_0$.
\end{lem}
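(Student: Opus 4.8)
The plan is a direct verification using the product and chain rules; no fixed-point or Gronwall argument is needed here. Write the scalar prefactor as
\begin{align*}
\phi(t) := \exp\left\{\int_s^t \left\|u^{0}(r)\right\|_2^2\, dr\right\},\qquad t\geq s\geq 0,
\end{align*}
where the integration variable has been renamed to $r$ to avoid the (harmless) clash with the lower limit $s$ in the statement. Since $u\in L^2$, the integrand $\left\|u^{0}(\cdot)\right\|_2^2$ is locally integrable, so $\phi$ is absolutely continuous, $\phi(s)=1$, and $\dot\phi(t)=\left\|u^{0}(t)\right\|_2^2\,\phi(t)$ for almost every $t\geq s$. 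The claimed solution is $Z(t)=\phi(t)\,\bar Z(t-s,Z_0)$.

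First I would differentiate $Z$. By the product rule, and using that $\frac{d}{dt}\bar Z(t-s,Z_0)$ equals the right-hand side of \gref{matrix_eq_sinu} evaluated at the argument $t-s$, one gets
\begin{align*}
\dot Z(t) = \left\|u^{0}(t)\right\|_2^2\,\phi(t)\,\bar Z(t-s,Z_0) + \phi(t)\left(A\,\bar Z(t-s,Z_0) + \bar Z(t-s,Z_0)A^T + \sum_{k=1}^m N_k\,\bar Z(t-s,Z_0)\,N_k^T\right).
\end{align*}
Then I would pull $\phi(t)$ into every term on the right and replace $\phi(t)\,\bar Z(t-s,Z_0)$ by $Z(t)$; this reproduces \gref{matrix_eq} exactly. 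The initial condition follows from $Z(s)=\phi(s)\,\bar Z(0,Z_0)=\bar Z(0,Z_0)=Z_0$ by \gref{matrix_eq_sinu}.

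I do not expect a real obstacle. The only points deserving care are the renaming of the dummy variable noted above and the regularity of $u$: because $\left\|u^{0}\right\|_2^2$ need not be continuous, the identities \gref{matrix_eq_sinu} and \gref{matrix_eq} are to be understood in the integrated (equivalently, almost-everywhere) sense, which is consistent with how the fundamental solution in \gref{fun_sol_bil} is defined, and the computation above is valid in exactly that sense.
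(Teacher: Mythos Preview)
Your proposal is correct and follows exactly the same approach as the paper, which simply states that the result follows by applying the product rule to $Z(t)=\exp\left\{\int_s^t \left\|u^{0}(r)\right\|_2^2\, dr\right\} \bar Z(t-s, Z_0)$. Your version is just a more detailed write-up of that one-line argument, with the added care about the dummy variable and the almost-everywhere interpretation, neither of which the paper spells out.
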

\begin{proof}
The result follows by applying the product rule to $Z(t)=\exp\left\{\int_s^t \left\|u^{0}(s)\right\|_2^2 ds\right\} \bar Z(t-s, Z_0)$.
\end{proof}\\
A Gronwall lemma follows next which yields a relation between the above matrix (in)equalities.
\begin{lem}\label{lem3}
Let the matrix-valued function $X(t)$, $t\geq s\geq 0$, satisfy (\ref{matrix_ineq}) and let $Z(t)$, $t\geq s\geq 0$, be the solution to the matrix differential equation (\ref{matrix_eq}).
If $X(s)\leq Z(s)$, we have that $X(t)\leq Z(t)$ for all $t\geq s\geq 0$.
\end{lem}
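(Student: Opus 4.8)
The plan is to prove Lemma~\ref{lem3} by a comparison/Gronwall argument for matrix differential (in)equalities. Set $D(t) := Z(t) - X(t)$, so that $D(s) \geq 0$ by hypothesis, and subtract (\ref{matrix_ineq}) from (\ref{matrix_eq}) to obtain
\begin{align*}
 \dot D(t) \geq A D(t) + D(t) A^T + \sum_{k=1}^m N_k D(t) N_k^T + D(t)\left\|u^0(t)\right\|_2^2,
\end{align*}
where the inequality is understood in the positive semidefinite order. The goal is to conclude $D(t) \geq 0$ for all $t \geq s$.

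The main idea is to remove the linear operator on the right-hand side by an integrating-factor argument. First I would consider the linear matrix flow associated with the operator $\mathcal{L}_t(\cdot) := A(\cdot) + (\cdot)A^T + \sum_k N_k (\cdot) N_k^T + \left\|u^0(t)\right\|_2^2(\cdot)$; concretely, let $\Psi(t,\tau)$ be the (completely positive) linear propagator solving $\partial_t \Psi(t,\tau)[W] = \mathcal{L}_t(\Psi(t,\tau)[W])$ with $\Psi(\tau,\tau) = \mathrm{id}$. Two facts are needed: (i) $\Psi(t,\tau)$ maps positive semidefinite matrices to positive semidefinite matrices — this follows because it can be written as an expectation of a congruence transformation $\Psi(t,\tau)[W] = \mathbb{E}[\Phi(t,\tau) W \Phi^T(t,\tau)]$ for an appropriate stochastic fundamental solution $\Phi$, or more elementarily by a Lie–Trotter splitting that alternates the congruences $e^{A\,\delta}(\cdot)e^{A^T\delta}$, the scalar multiplications, and the completely positive maps $\sum_k N_k(\cdot)N_k^T$, each of which preserves positivity; and (ii) the usual variation-of-constants/Duhamel formula holds, giving
\begin{align*}
 D(t) = \Psi(t,s)[D(s)] + \int_s^t \Psi(t,\tau)\big[\dot D(\tau) - \mathcal{L}_\tau(D(\tau))\big]\, d\tau.
\end{align*}
Since $D(s) \geq 0$ and the integrand is $\geq 0$ by the differential inequality, and $\Psi(t,\tau)$ preserves positivity, we get $D(t) \geq 0$, i.e. $X(t) \leq Z(t)$, as claimed.

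An alternative, perhaps cleaner route that avoids defining the propagator explicitly is a scalar reduction: fix an arbitrary vector $v \in \R^n$ and an arbitrary $t_1 > s$, and show $v^T D(t_1) v \geq 0$. One can attempt a direct Gronwall estimate on $\varphi(t) := v^T D(t) v$, but the cross term $v^T(AD + DA^T + \sum_k N_k D N_k^T)v$ is not controlled by $\varphi(t)$ alone, so the scalar reduction does not close on its own; this is exactly why the matrix-valued propagator (or its stochastic representation) is the natural tool. I expect the main obstacle to be establishing positivity-preservation of $\Psi(t,\tau)$ rigorously — in the appendix this is likely handled either via the stochastic representation of the fundamental solution (connecting to the linear stochastic system alluded to in the introduction) or via a time-discretization/splitting argument with a limit passage; both are routine but require care with the convergence of the splitting. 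Once positivity-preservation is in hand, the Duhamel identity and the sign of the integrand finish the proof immediately.
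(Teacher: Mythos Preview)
Your approach is correct but differs from the paper's. You use the Duhamel/variation-of-constants representation $D(t) = \Psi(t,s)[D(s)] + \int_s^t \Psi(t,\tau)[\dot D(\tau) - \mathcal{L}_\tau(D(\tau))]\,d\tau$ and reduce the lemma to the single structural fact that the propagator $\Psi(t,\tau)$ of $\mathcal{L}_t$ preserves positive semidefiniteness (which you propose to establish via the stochastic representation or a Lie--Trotter splitting). The paper instead works at the infinitesimal level: it introduces an $\epsilon$-perturbation $\dot Y_\epsilon = L(Y_\epsilon) + D + \epsilon I$ with $Y_\epsilon(s) = Y(s) + \epsilon I$, and argues by contradiction at the first time $t_0$ and direction $v_0$ where positive definiteness fails. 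At that point $Y_\epsilon(t_0)\geq 0$ and $\langle Y_\epsilon(t_0), v_0 v_0^T\rangle_F = 0$, so the \emph{cross-positivity} characterization of resolvent positive operators (Theorem~\ref{equiresolpos}) gives $v_0^T L(Y_\epsilon(t_0)) v_0 \geq 0$; together with the $\epsilon I$ term this forces $\frac{d}{dt} v_0^T Y_\epsilon(t_0) v_0 > 0$, contradicting the choice of $t_0$. Letting $\epsilon\to 0$ finishes. Both proofs rest on the same underlying property --- that $\mathcal{L}_t$ generates a positive evolution --- but the paper uses only the pointwise cross-positivity criterion and never needs to construct or analyze $\Psi$, whereas your route packages everything into one Duhamel identity at the cost of having to justify positivity of the full propagator (your Trotter sketch should invoke $e^{\delta\Pi}$ with $\Pi(\cdot)=\sum_k N_k(\cdot)N_k^T$, not $\Pi$ itself, as the third factor). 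Your argument is arguably more conceptual; the paper's is more self-contained.
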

\begin{proof}
See Appendix \ref{secprooflem3}.
\end{proof}\\
From Lemmas \ref{lem1}, \ref{lem2} and \ref{lem3} it immediately follows that 
\begin{align}\label{essentialest}
x_{(s)}(t, x_0, 0) x_{(s)}^T(t, x_0, 0)\leq \exp\left\{\int_s^t \left\|u^{0}(\tau)\right\|_2^2 d\tau\right\}\bar Z(t-s, x_0 x_0^T)
\end{align}
for all $t\geq s\geq 0$ by setting $Z_0 = x_0 x_0^T$. Inequality (\ref{essentialest}) implies an estimate for $\Phi_u$ that we derive in Theorem \ref{mainthm} to establish an output bound for bilinear systems. 
Before we get to this bound, we briefly discuss the relation between linear stochastic and deterministic bilinear systems.

\section{Relation to stochastic systems and their stability}

Let us introduce the following stochastic differential equation \begin{align}\label{hom_stoch_eq}
d z(t) = Az(t) dt + \sum_{k=1}^m N_k z(t) dw_k(t),\quad z(0)=x_0,
\end{align}
where $w_1, \ldots, w_m$ are independent standard Brownian motions. It is obtained by replacing the control components in (\ref{hom_bil_eq}) by the ``derivatives'' of these stochastic processes. Since these 
derivatives are no longer classical functions, there is a completely different theory behind (\ref{hom_stoch_eq}) in comparison to the respective homogeneous bilinear system. Still (\ref{essentialest}) provides 
an interesting link between both systems which we obtain by a stochastic representation  of the solution $\bar Z$ of (\ref{matrix_eq_sinu}). 
\begin{proposition}\label{prop_stoch_rep}
Let $z(\cdot, x_0)$ denote the solution to (\ref{hom_stoch_eq}) with initial condition $x_0$, then the function $\mathbb E\left[z(t, x_0)z^T(t, x_0)\right]$, $t\geq 0$, solves (\ref{matrix_eq_sinu}) with $Z_0 = x_0 x_0^T$.
\end{proposition}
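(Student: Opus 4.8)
The plan is to apply the Itô formula to the matrix-valued process $z(t,x_0)z^T(t,x_0)$ and then take expectations, showing that the resulting deterministic function satisfies the linear matrix ODE \gref{matrix_eq_sinu}. First I would write, componentwise, $z(t) = x_0 + \int_0^t A z(\tau)\,d\tau + \sum_{k=1}^m \int_0^t N_k z(\tau)\,dw_k(\tau)$, and set $P(t):=z(t)z^T(t)$. Applying the Itô product rule to $z(t)z^T(t)$ gives
\begin{align*}
 d\bigl(z(t)z^T(t)\bigr) &= (dz(t))\,z^T(t) + z(t)\,(dz(t))^T + d[z,z^T](t),
\end{align*}
where the quadratic covariation term, using independence of the $w_k$ (so $dw_j\,dw_k = \delta_{jk}\,dt$ and $dt\,dw_k = dt\,dt = 0$), is $d[z,z^T](t) = \sum_{k=1}^m N_k z(t) z^T(t) N_k^T\,dt$.

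Substituting the expression for $dz(t)$, the drift part contributes $\bigl(A z(t)z^T(t) + z(t)z^T(t) A^T\bigr)dt$ and the martingale part contributes $\sum_{k=1}^m \bigl(N_k z(t) z^T(t)\,dw_k(t) + z(t) z^T(t) N_k^T\,dw_k(t)\bigr)$. Collecting terms,
\begin{align*}
 d\bigl(z(t)z^T(t)\bigr) &= \Bigl(A z(t)z^T(t) + z(t)z^T(t) A^T + \sum_{k=1}^m N_k z(t) z^T(t) N_k^T\Bigr)dt \\
 &\quad + \sum_{k=1}^m \bigl(N_k z(t) z^T(t) + z(t) z^T(t) N_k^T\bigr) dw_k(t).
\end{align*}
Next I would take expectations. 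Since $A$ is Hurwitz and the coefficients are linear, standard moment estimates for linear SDEs guarantee $\mathbb E\|z(t)\|_2^2 < \infty$ on every finite interval, so the stochastic integrals are true martingales with zero mean and $\mathbb E$ commutes with the (Bochner) time integral. Writing $\bar Z(t) := \mathbb E[z(t,x_0)z^T(t,x_0)]$ and using $\mathbb E[z(0)z^T(0)] = x_0 x_0^T = Z_0$, we obtain
\begin{align*}
 \bar Z(t) = Z_0 + \int_0^t \Bigl(A \bar Z(\tau) + \bar Z(\tau) A^T + \sum_{k=1}^m N_k \bar Z(\tau) N_k^T\Bigr) d\tau,
\end{align*}
which is exactly the integral form of \gref{matrix_eq_sinu} with $\bar Z(0) = Z_0 = x_0 x_0^T$. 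Differentiating (the integrand being continuous) yields the stated ODE.

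The main obstacle is the careful bookkeeping of the Itô correction term: one has to verify that the cross-variations between the $m$ independent driving noises vanish and that each $dw_k$ contributes a term $N_k z z^T N_k^T\,dt$, so that the correction is precisely $\sum_k N_k \bar Z N_k^T$ rather than, say, a term involving $N_j z z^T N_k^T$ with $j\neq k$; this is where the independence of the Brownian motions is used essentially. A secondary technical point is justifying the exchange of expectation and integration and the vanishing of the martingale-part expectation, which follows from the finiteness of second moments for linear SDEs with Hurwitz (indeed, arbitrary) drift on compact time intervals. Both points are routine in stochastic analysis, so the proof is short once the Itô product rule is set up correctly.
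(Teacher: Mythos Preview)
Your proposal is correct and is precisely the standard argument one finds in the references the paper cites (e.g., Damm's monograph): apply the It\^o product rule to $z(t)z^T(t)$, use independence of the $w_k$ to identify the correction term as $\sum_k N_k z z^T N_k^T\,dt$, and take expectations so that the martingale part vanishes. The paper itself does not spell out a proof but simply refers the reader to \cite{damm,redmannspa2}, so there is nothing further to compare.
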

\begin{proof}
 We refer to, e.g., \cite{damm} or \cite{redmannspa2} for a proof.
\end{proof}\\
Consequently, (\ref{essentialest}) with $s=0$ becomes \begin{align}\label{essentialest_stoch}
x(t, x_0, 0) x^T(t, x_0, 0)\leq \exp\left\{\int_0^t \left\|u^{0}(\tau)\right\|_2^2 d\tau\right\} \mathbb E\left[z(t, x_0)z^T(t, x_0)\right]
\end{align}
using Proposition \ref{prop_stoch_rep}. It means that one can read off properties of (\ref{hom_bil_eq}) from properties of (\ref{hom_stoch_eq}) such as stability.

We discuss mean square asymptotic stability of (\ref{hom_stoch_eq}) and its consequences for (\ref{hom_bil_eq}) in the following because this condition will 
appear as an assumption in the main theorem below.
\begin{lem}\label{lemstab}
The following two statements are equivalent:
\begin{itemize}
 \item Equation (\ref{hom_stoch_eq}) is exponentially mean square stable, that is, there exist $k_1, k_2>0$, such that 
\begin{align}\label{expmanstab0}
\mathbb E \left\|z(t, x_0)\right\|^2_2\leq \left\|x_0\right\|^2_2 k_1 \expn^{-k_2 t}.
\end{align}
\item The eigenvalues of $A\otimes I+I\otimes A+ \sum_{k=1}^m N_k \otimes N_k$ have negative real parts only, i.e.,
 \begin{align}\label{expmanstab}
 \sigma(A\otimes I+I\otimes A+ \sum_{k=1}^m N_k \otimes N_k)\subset \mathbb C_-.  
\end{align}
\end{itemize}
Moreover, $\sigma(A)\subset \mathbb C_-$ and \begin{align}\label{suffsstab}
                \left\|\int_0^\infty \expn^{A t} \left(\sum_{k=1}^m N_k N_k^T\right)\expn^{A^T t}dt\right\|_2<1        
                       \end{align}
imply (\ref{expmanstab}).
\end{lem}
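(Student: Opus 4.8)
Throughout, write $\mathcal{A} := A\otimes I + I\otimes A + \sum_{k=1}^m N_k\otimes N_k$ and, for a square matrix $X$, $\mathcal{L}(X) := AX + XA^T + \sum_{k=1}^m N_k X N_k^T$. The plan rests on three standard observations: the identity $\vect(MXL) = (L^T\otimes M)\vect(X)$ turns \gref{matrix_eq_sinu} into $\frac{d}{dt}\vect(\bar Z(t)) = \mathcal{A}\,\vect(\bar Z(t))$, so $\bar Z(t,Z_0) = \expn^{\mathcal{L}t}(Z_0)$ and $\vect(\bar Z(t,Z_0)) = \expn^{\mathcal{A}t}\vect(Z_0)$; by Proposition \ref{prop_stoch_rep}, $\mathbb E[z(t,x_0)z^T(t,x_0)] = \bar Z(t,x_0x_0^T)$, hence $\mathbb E\|z(t,x_0)\|_2^2 = \trace\bar Z(t,x_0x_0^T)$; and $\sigma(\mathcal{A})\subset\mathbb C_-$ holds iff $\expn^{\mathcal{A}t}\to 0$.

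Given this, \gref{expmanstab}$\Rightarrow$\gref{expmanstab0} is immediate: from $\sigma(\mathcal{A})\subset\mathbb C_-$ one has $\|\expn^{\mathcal{A}t}\|_2\le\kappa\,\expn^{-\alpha t}$ for some $\kappa,\alpha>0$, so $\mathbb E\|z(t,x_0)\|_2^2 = \trace\bar Z(t,x_0x_0^T)\le\sqrt n\,\|\expn^{\mathcal{A}t}\|_2\|x_0x_0^T\|_F = \sqrt n\,\kappa\,\expn^{-\alpha t}\|x_0\|_2^2$. For the converse I would exploit that $\expn^{\mathcal{L}t}$ is a positive semigroup: writing a positive semidefinite $Z_0$ as $\sum_j x_jx_j^T$ and applying Proposition \ref{prop_stoch_rep} termwise gives $\bar Z(t,Z_0)=\sum_j\mathbb E[z(t,x_j)z^T(t,x_j)]\ge 0$, whence $\|\bar Z(t,Z_0)\|_2\le\trace\bar Z(t,Z_0)\le k_1\expn^{-k_2t}\trace Z_0$ by \gref{expmanstab0}. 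Since every symmetric matrix is a difference of two positive semidefinite ones --- and, after complexifying and repeating the It\^o computation behind Proposition \ref{prop_stoch_rep} with complex data, every matrix in $\C^{n\times n}$ is an $\R$-combination of Hermitian ones, each a difference of Hermitian positive semidefinite matrices --- this bound propagates to arbitrary initial data, so $\expn^{\mathcal{A}t}\to 0$, i.e.\ \gref{expmanstab}. (The equivalence is classical; I would otherwise refer to \cite{damm}.)

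For the final assertion I would introduce the Lyapunov inverse $\mathcal{G}(Y):=\int_0^\infty\expn^{At}Y\expn^{A^Tt}\,dt$ (finite since $A$ is Hurwitz, and inverting $Y\mapsto-(AY+YA^T)$), the positive map $\mathcal{N}(X):=\sum_{k=1}^m N_kXN_k^T$, $\mathcal{F}:=\mathcal{G}\circ\mathcal{N}$, and $\mathcal{Q}_0:=\mathcal{G}(I)>0$, noting $\mathcal{F}(I)=\mathcal{Q}$ for the matrix $\mathcal{Q}$ in \gref{suffsstab}. Since $\mathcal{G},\mathcal{N}$ preserve the positive semidefinite cone, so does $\mathcal{F}$, and for $X\ge 0$ the bound $X\le\|X\|_2 I$ gives $\mathcal{F}(X)\le\|X\|_2\mathcal{F}(I)=\|X\|_2\mathcal{Q}\le\|X\|_2\|\mathcal{Q}\|_2 I$, i.e.\ $\|\mathcal{F}(X)\|_2\le\|\mathcal{Q}\|_2\|X\|_2$. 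As $\|\mathcal{Q}\|_2<1$, the series $P:=\sum_{i\ge 0}\mathcal{F}^i(\mathcal{Q}_0)$ converges absolutely (its $i$-th term is positive semidefinite of spectral norm $\le\|\mathcal{Q}\|_2^i\|\mathcal{Q}_0\|_2$) and satisfies $P\ge\mathcal{Q}_0>0$. Using $AX+XA^T=-\mathcal{G}^{-1}(X)$, a telescoping computation gives $\mathcal{L}(\sum_{i=0}^N\mathcal{F}^i(\mathcal{Q}_0))=-I+\mathcal{N}(\mathcal{F}^N(\mathcal{Q}_0))$, and letting $N\to\infty$ yields $\mathcal{L}(P)=-I$ with $P>0$. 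To close, I would take $\varepsilon:=1/\|P\|_2$, $c:=\lambda_{\min}(P)>0$, observe that for $M\ge 0$ the matrix $W(t):=\expn^{-\varepsilon t}\tfrac{\|M\|_2}{c}P$ obeys $\dot W\ge\mathcal{L}(W)$ and $W(0)\ge M$, and apply Lemma \ref{lem3} with $u^0\equiv 0$ to $X:=\expn^{\mathcal{L}t}M-W(t)\le 0$ and $Z\equiv 0$ to conclude $0\le\expn^{\mathcal{L}t}M\le W(t)$, hence $\|\expn^{\mathcal{L}t}M\|_2\to 0$; extending to general initial data as before gives $\expn^{\mathcal{A}t}\to 0$, i.e.\ \gref{expmanstab}. (Up to the right-hand side, $P$ is the reachability Gramian of \cite{typeIBT}.)

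I expect the main obstacle to be the converse direction \gref{expmanstab0}$\Rightarrow$\gref{expmanstab} --- and the analogous last step of the ``moreover'' part --- namely upgrading the decay estimate from rank-one / positive semidefinite initial data, where the stochastic representation and the cone-invariance of $\expn^{\mathcal{L}t}$ are at hand, to arbitrary (complex) initial data, which is what controls all of $\sigma(\mathcal{A})$ rather than merely the action of $\mathcal{L}$ on symmetric matrices. The rest --- the Kronecker bookkeeping, the properties of $\mathcal{G}$, the Neumann series, and the comparison via Lemma \ref{lem3} --- I regard as routine.
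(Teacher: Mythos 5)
Your argument is correct, but it is worth knowing that the paper does not actually prove Lemma \ref{lemstab} at all: it defers both assertions to the literature, citing \cite{damm, staboriginal, redmannspa2} for the equivalence of \gref{expmanstab0} and \gref{expmanstab} and \cite{damm, haus, wonham} for the sufficiency of \gref{suffsstab}. What you have written is a self-contained reconstruction of those classical proofs, and it hangs together: the Kronecker identification of $\mathcal{L}$ with $\mathcal{A}$, the stochastic representation of $\expn^{\mathcal{L}t}$ on rank-one positive data, the cone decomposition to pass from positive semidefinite to arbitrary initial data, the Neumann series $P=\sum_i\mathcal{F}^i(\mathcal{Q}_0)$ solving $\mathcal{L}(P)=-I$ under \gref{suffsstab}, and the comparison with $W(t)=\expn^{-\varepsilon t}\tfrac{\|M\|_2}{c}P$ via Lemma \ref{lem3} are all sound (the telescoping identity and the inequality $\dot W\geq\mathcal{L}(W)$, which reduces to $P\leq\|P\|_2 I$, check out). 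The one imprecision is the claim that every matrix in $\C^{n\times n}$ is an $\R$-combination of Hermitian matrices: the real span of the Hermitian matrices has real dimension $n^2$, not $2n^2$, so this is false as stated. What you need (and clearly intend) is $M=H_1+iH_2$ with $H_1,H_2$ Hermitian, i.e.\ a $\C$-linear combination; since $\mathcal{A}$ acts $\C$-linearly on $\C^{n^2}$, decay on Hermitian positive semidefinite data still propagates to all of $\C^{n\times n}$ and the conclusion $\sigma(\mathcal{A})\subset\C_-$ follows. This correctly identified step --- upgrading decay from the cone to the full complexified space --- is indeed the only non-routine point, and your fix for it is the standard one. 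Relative to the paper, your route buys a self-contained proof at the cost of length; the paper buys brevity by citation.
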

\begin{proof}
The equivalence of (\ref{expmanstab0}) and (\ref{expmanstab}) is a well-known result for stochastic systems. A proof can, e.g., be found in \cite{damm, staboriginal, redmannspa2}. That 
$\sigma(A)\subset \mathbb C_-$ and (\ref{suffsstab}) are sufficient for exponentially mean square stability is proved in \cite{damm, haus, wonham}.
\end{proof}\\
Applying the trace operator to both sides of (\ref{essentialest_stoch}) and using Lemma \ref{lemstab}, condition (\ref{expmanstab}) yields \begin{align*}
     \left\|x(t, x_0, 0)\right\|^2_2 \leq \exp\left\{\int_0^t \left\|u^{0}(\tau)\right\|_2^2 d\tau\right\} \left\|x_0\right\|^2_2 k_1 \expn^{-k_2 t}                                                                                                      
                                                                                                             \end{align*}
and hence asymptotic stability of the homogeneous bilinear equation if $u\in L^2$. In Section \ref{sec_main_res}, we will see that even $\sigma(A)\subset \mathbb C_-$ is sufficient for asymptotic stability in a homogeneous bilinear 
system.

\section{An $\mathcal{H}_2$-bound for bilinear systems and further properties}\label{sec_main_res}

In this section, we are now able to establish several new results for system (\ref{sys:original}) based on Lemmas \ref{lem1}, \ref{lem2}, \ref{lem3} and their consequences discussed above. 
We begin with the main result of this paper which is an output bound leading to an output error bound between systems (\ref{sys:original}) and (\ref{sys:reduced}). Subsequently, we discuss the identification of less relevant 
states in a bilinear system based on a new estimate. Moreover, we show that the output of a bilinear systems is bounded and the homogeneous state equation is asymptotically stable given that the matrix $A$ is Hurwitz.

\subsection{Output bounds and reachability estimate}

The next theorem provides an output bound depending on the $\mathcal H_2$-norm of a bilinear system.
\begin{thm}\label{mainthm}
Let $y$ be the output of system (\ref{sys:original}) with $x_0=0$ and suppose that
 \begin{align}\label{assumption}
 \sigma(A\otimes I+I\otimes A+ \sum_{k=1}^m N_k \otimes N_k)\subset \mathbb C_-.  
\end{align}
Then, it holds that \begin{align*}
  \sup_{t\geq 0}\left\|y(t)\right\|_2 \leq \left(\trace(C P C^T)\right)^{\frac{1}{2}} \exp\left\{0.5\left\|u^{0}\right\|_{L^2}^2\right\} \left\|u\right\|_{L^2},                   
                    \end{align*}
where $P$ is the unique solution to \begin{align}\label{lyapeq}
                  A P + P A^T + \sum_{k=1}^m N_k P N_k^T = - B B^T.
                                                                                              \end{align}
\end{thm}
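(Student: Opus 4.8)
The plan is to combine the solution representation of Theorem \ref{sol_rep} (with $x_0=0$), so that $y(t) = C\int_0^t \Phi_u(t,s)Bu(s)\,ds$, with the fundamental-solution estimate \gref{essentialest}. First I would write, for each column $b_j$ of $B$,
\begin{align*}
y(t) = C\int_0^t \Phi_u(t,s)B u(s)\,ds = \int_0^t C\,\Phi_u(t,s)B u(s)\,ds,
\end{align*}
and estimate $\left\|y(t)\right\|_2$ by pulling the norm inside the integral via Cauchy--Schwarz. The key is that $C\Phi_u(t,s)Bu(s) = C\,x_{(s)}\!\big(t, Bu(s), 0\big)$, i.e.\ the homogeneous solution started at time $s$ from the vector $Bu(s)$, so \gref{essentialest} applies to the rank-one matrix $\big(Bu(s)\big)\big(Bu(s)\big)^T$. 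That inequality gives, after multiplying on the left by $C$ and on the right by $C^T$ and taking the trace,
\begin{align*}
\left\|C\Phi_u(t,s)Bu(s)\right\|_2^2 \leq \exp\Big\{\int_s^t \left\|u^{0}(\tau)\right\|_2^2 d\tau\Big\}\,\trace\!\big(C\,\bar Z(t-s,\, Bu(s)\,u^T(s)B^T)\,C^T\big).
\end{align*}

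Next I would use linearity of $\bar Z$ in its initial condition: since \gref{matrix_eq_sinu} is a linear matrix ODE, $\bar Z(\cdot, Bu(s)u^T(s)B^T)$ is governed by the same flow as $\bar Z(\cdot, BB^T)$ scaled appropriately; more precisely $\trace\big(C\bar Z(\sigma, Bvv^TB^T)C^T\big) \le \left\|v\right\|_2^2\,\trace\big(C\bar Z(\sigma, BB^T)C^T\big)$ is not quite immediate because $vv^T \not\le \left\|v\right\|_2^2 I$ in the needed sense — actually $vv^T \le \left\|v\right\|_2^2 I$ does hold, and by the monotonicity in Lemma \ref{lem3} (or directly, since \gref{matrix_eq_sinu} preserves the order $\le$), $\bar Z(\sigma, vv^T) \le \left\|v\right\|_2^2 \bar Z(\sigma, I)$ — but then I would rather track $\int_0^\infty \bar Z(\sigma, BB^T)\,d\sigma$, which by integrating \gref{matrix_eq_sinu} and using assumption \gref{assumption} (which makes the flow integrable, cf.\ Lemma \ref{lemstab}) equals exactly $P$, the solution of the Lyapunov equation \gref{lyapeq}. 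So the cleaner route: bound $\left\|u^{0}(\tau)\right\|_2 \le \left\|u^{0}\right\|$ suitably, extract $\exp\{0.5\|u^{0}\|_{L^2}^2\}$ as a uniform factor, apply Cauchy--Schwarz in the $s$-integral to split off $\|u\|_{L^2}$, and recognize the remaining $\int_0^t \trace\big(C\bar Z(t-s, BB^T)C^T\big)\,ds \le \int_0^\infty \trace\big(C\bar Z(\sigma, BB^T)C^T\big)\,d\sigma = \trace(CPC^T)$.

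Concretely, the chain is: $\left\|y(t)\right\|_2 \le \int_0^t \left\|C\Phi_u(t,s)Bu(s)\right\|_2 ds \le \int_0^t \exp\{0.5\int_s^t\|u^0\|_2^2\} \big(\trace(C\bar Z(t-s, BB^T)C^T)\big)^{1/2}\|u(s)\|_2\,ds$; bound the exponential by $\exp\{0.5\|u^0\|_{L^2}^2\}$, then Cauchy--Schwarz gives $\le \exp\{0.5\|u^0\|_{L^2}^2\}\big(\int_0^t \trace(C\bar Z(t-s,BB^T)C^T)\,ds\big)^{1/2}\|u\|_{L^2} \le \exp\{0.5\|u^0\|_{L^2}^2\}\,(\trace(CPC^T))^{1/2}\|u\|_{L^2}$, and the supremum over $t$ is harmless since no bound depends on $t$. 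The main obstacle I anticipate is the bookkeeping at the rank-one step — justifying that \gref{essentialest} may be invoked with $x_0 = Bu(s)$ and initial time $s$ (it can, that is exactly what the subscript-$(s)$ notation is for), and then correctly passing from the rank-one initial datum $Bu(s)u^T(s)B^T$ to the datum $BB^T$ of the Lyapunov equation via positivity/monotonicity and the identity $P = \int_0^\infty \bar Z(\sigma, BB^T)\,d\sigma$, which itself needs assumption \gref{assumption} to guarantee convergence and uniqueness. Everything else is Cauchy--Schwarz and bounding one exponential by a larger one.
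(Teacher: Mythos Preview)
Your proposal is correct and follows essentially the same route as the paper: the solution representation with $x_0=0$, the estimate \gref{essentialest} applied pointwise in $s$, linearity/monotonicity of $\bar Z$ to pass to the initial datum $BB^T$, Cauchy--Schwarz to split off $\|u\|_{L^2}$, and the identification $\int_0^\infty \bar Z(\sigma,BB^T)\,d\sigma = P$ under \gref{assumption}. The only cosmetic difference is that the paper first uses $\|C\Phi_u(t,s)Bu(s)\|_2 \le \|C\Phi_u(t,s)B\|_F\,\|u(s)\|_2$ and then decomposes $B$ column by column before invoking \gref{essentialest}, whereas you apply \gref{essentialest} directly with $x_0 = Bu(s)$ and then use $Bu(s)u(s)^TB^T \le \|u(s)\|_2^2\,BB^T$ (which, as you note, is just $u(s)u(s)^T \le \|u(s)\|_2^2 I$ conjugated by $B$) together with monotonicity of the flow; these two steps are equivalent and yield the identical bound.
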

\begin{proof}
Let $y(t)= C x(t, 0, B)$ be the output of (\ref{sys:original}) with zero initial state. Then, with the representation from Theorem \ref{sol_rep}, we have  
\begin{align}\label{estimstey}
\left\|y(t)\right\|_2&=\left\|C \int_0^t \Phi_u(t, s) B u(s) ds\right\|_2 \leq \int_0^t \left\|C \Phi_u(t, s) B u(s)\right\|_2 ds\\ \nonumber
&\leq \int_0^t \left\|C \Phi_u(t, s) B\right\|_F \left\|u(s)\right\|_2 ds \leq \left(\int_0^t \left\|C \Phi_u(t, s) B \right\|_F^2 ds\right)^{\frac{1}{2}} \left(\int_0^t \left\|u(s)\right\|_2^2 ds\right)^{\frac{1}{2}}.
\end{align}
We further analyze the term $\int_0^t \left\|C \Phi_u(t, s) B \right\|_F^2 ds$. 
We partition $B = [b_1, b_2, \ldots, b_m]$, where $b_k$ is the $k$th column of $B$. We have \begin{equation}\label{auxident}\begin{aligned}
  \Phi_u(t, s) B = [\Phi_u(t, s) b_1, \Phi_u(t, s) b_2, \ldots, \Phi_u(t, s) b_m] 
  = [x_{(s)}(t, b_1 , 0), x_{(s)}(t, b_2 , 0), \ldots, x_{(s)}(t, b_m , 0)].                                                                                \end{aligned}
                                                                                                                                                                        \end{equation}
 As mentioned before, Lemmas \ref{lem1}, \ref{lem2} and \ref{lem3} provide (\ref{essentialest}). Via (\ref{auxident}) and (\ref{essentialest}), we obtain \begin{align*}
\Phi_u(t, s) B B^T \Phi^T_u(t, s) &= \sum_{k=1}^m x_{(s)}(t, b_k , 0)x_{(s)}^T(t, b_k , 0) \leq  \exp\left\{\int_s^t \left\|u^{0}(\tau)\right\|_2^2 d\tau\right\} \sum_{k=1}^m \bar Z(t-s, b_k b_k^T).
\end{align*}
Since the solution $\bar Z$ to (\ref{matrix_eq_sinu}) is linear in its initial condition and positive semidefinite, we find \begin{align*}
\Phi_u(t, s) B B^T \Phi^T_u(t, s) \leq  \exp\left\{\int_0^t \left\|u^{0}(\tau)\right\|_2^2 d\tau\right\} \bar Z(t-s, BB^T)
\end{align*}
using $\int_s^t \left\|u^{0}(\tau)\right\|_2^2 d\tau\leq\int_0^t \left\|u^{0}(\tau)\right\|_2^2 d\tau$.
This estimated leads to \begin{align*}
  \int_0^t \left\|C \Phi_u(t, s) B \right\|_F^2 ds &=  \int_0^t \trace(C \Phi_u(t, s) B B^T \Phi_u^T(t, s) C^T)ds  \\
   &\leq\exp\left\{\int_0^t \left\|u^{0}(s)\right\|_2^2 ds\right\}  \int_0^t\trace(C \bar Z(t-s, BB^T) C^T)ds \\
   &=\exp\left\{\int_0^t \left\|u^{0}(s)\right\|_2^2 ds\right\} \trace(C\int_0^t \bar Z(s, BB^T) ds\; C^T)
                        \end{align*}
using the linearity of the trace operator and substitution $t-s\mapsto s$. We define $P_t:=\int_0^t \bar Z(s, BB^T)  ds$ and insert the above result into (\ref{estimstey}) which yields 
\begin{align}\label{lastest}
\left\|y(t)\right\|_2\leq \exp\left\{0.5 \int_0^t \left\|u^{0}(s)\right\|_2^2 ds\right\} \left(\trace(C P_t C^T)\right)^{\frac{1}{2}} \left(\int_0^t \left\|u(s)\right\|_2^2 ds\right)^{\frac{1}{2}}.
\end{align}
$P_t$ is increasing in $t$ since $\bar Z$ is positive semidefinite. Moreover, $\left\|\bar Z(t, B B^T)\right\|_2\lesssim \expn^{-k t}$ for some $k>0$ due to assumption (\ref{assumption}). This can be seen by vectorizing (\ref{matrix_eq_sinu}) 
leading to \begin{align*}
\frac{d}{dt} \vect\left(\bar Z(t)\right) =  (A\otimes I+I\otimes A+ \sum_{k=1}^m N_k \otimes N_k)\vect\left(\bar Z(t)\right),
           \end{align*}
such that (\ref{assumption}) implies asymptotic stability of $\vect\left(\bar Z\right)$ and hence the same for $\bar Z$. Therefore, $P=\lim_{t\rightarrow \infty} P_t$ exists. The matrix equation (\ref{lyapeq})
for $P$ is obtained by integrating both sides of (\ref{matrix_eq_sinu}) over $[0, v]$ with $Z_0 = BB^T$ and then taking the limit of $v\rightarrow \infty$. Now, taking the supremum on both sides of (\ref{lastest}), 
the claim follows.
\end{proof}
\begin{remark}\label{remark1} 
It was shown in \cite{morZhaL02} that the term entering the bound in Theorem \ref{mainthm} is the Gramian based representation of the $\mathcal H_2-norm$ of system (\ref{sys:original}), i.e., 
$\left\|\Sigma\right\|_{\mathcal H_2}^2 = \trace(C P C^T)$. When we choose $N_k=0$ for all $k=1, \ldots, m$, then the exponential term in the bound becomes $1$ and hence we obtain the well-known relation 
between the output and the $\mathcal H_2$-norm in the linear case \cite{morGugAB08}.\smallskip

Condition (\ref{assumption}) is needed to guarantee the existence of the Gramian $P$. However, it can be weakened to $\sigma(A)\subset \mathbb C_-$, since the bilinear state equation can be equivalently rewritten as 
\begin{align}\label{rescaled_sys}
 \dot x(t)=Ax(t)+[\frac{1}{\gamma}B][\gamma u(t)]+\sum_{k=1}^m [\frac{1}{\gamma}N_k] x(t) [\gamma u_k(t)],
            \end{align}
see also\cite{morBenD11,morcondon2005}, where the weighted matrices $\tilde N_k=\frac{1}{\gamma}N_k$ can be made arbitrary small with a sufficiently large constant $\gamma>0$. Now, we see that we have
\begin{align*}
           \tilde f(A, \tilde N_k):=  \left\|\int_0^\infty \expn^{A t} \left(\sum_{k=1}^m \tilde N_k \tilde N_k^T\right)\expn^{A^T t}dt\right\|_2 
                \leq \frac{1}{\gamma^2}\sum_{k=1}^m \left\| N_k \right\|_2^2 \int_0^\infty \left\|\expn^{A t}\right\|_2^2  dt.
                       \end{align*}
$\mathcal I:= \int_0^\infty \left\|\expn^{A t}\right\|_2^2  dt$ is finite because $A$ is Hurwitz, such that choosing $\gamma > \sqrt{\sum_{k=1}^m \left\| N_k \right\|_2^2 \mathcal I}$ leads to 
 $\tilde f(A, \tilde N_k)<1$. This, by Lemma \ref{lemstab}, implies  
 \begin{align}\label{scalestab}
 \sigma(A\otimes I+I\otimes A+ \sum_{k=1}^m \tilde N_k \otimes \tilde N_k)\subset \mathbb C_-.  
\end{align}
Then, Theorem \ref{mainthm} can be applied to (\ref{rescaled_sys}) and we get
\begin{align}\label{outbounded}
  \sup_{t\geq 0}\left\|y(t)\right\|_2 \leq \gamma \left(\trace(C P_\gamma C^T)\right)^{\frac{1}{2}} \exp\left\{0.5\gamma^2\left\|u^{0}\right\|_{L^2}^2\right\} \left\|u\right\|_{L^2},                   
                    \end{align}
where $P_\gamma$ solves \begin{align*}
                  A P_\gamma + P_\gamma A^T + \frac{1}{\gamma^2}\sum_{k=1}^m N_k P_\gamma N_k^T = - \frac{1}{\gamma^2}B B^T.
                  \end{align*}
We see that the rescaling makes the bound potentially large for $\gamma\gg 1$ due to the exponential term. However, (\ref{outbounded}) shows that $y$ is always bounded if $u\in L^2$ and $\sigma(A)\subset \mathbb C_-$.
\end{remark}\\
We can derive an inequality from Theorem \ref{mainthm} that can be used to characterize reachability in the bilinear system. It leads to an improved characterization in comparison to 
\cite{morBenD11, enefungray98}, where energy estimates are shown that hold only in a small neighborhood of zero.
\begin{kor}\label{correach}
Let $x(t, 0, B)$, $t\geq 0$, be the solution to the state equation in (\ref{sys:original}) with $x_0=0$ and suppose that 
 (\ref{assumption}) holds. Let $P$ be the solution to (\ref{lyapeq}) and $(v_k)_{k=1, \ldots, n}$ be an orthonormal basis of eigenvectors of $P$ with corresponding eigenvalues $(\lambda_k)_{k=1, \ldots, n}$.
Then, it holds that \begin{align*}
  \sup_{t\geq 0}\left\vert\langle x(t, 0, B), v_k\rangle_2\right\vert \leq \lambda_k^{\frac{1}{2}} \exp\left\{0.5\left\|u^{0}\right\|_{L^2}^2\right\} \left\|u\right\|_{L^2}.          
                    \end{align*}
\end{kor}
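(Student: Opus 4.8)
The plan is to derive Corollary~\ref{correach} directly from the proof technique of Theorem~\ref{mainthm}, essentially replacing the matrix $C$ by a single row vector. Concretely, fix $k\in\{1,\ldots,n\}$ and observe that $\langle x(t,0,B),v_k\rangle_2 = v_k^T x(t,0,B)$, which is precisely the output of system~(\ref{sys:original}) with the output matrix $C$ taken to be the $1\times n$ row vector $v_k^T$ (and $x_0=0$). Applying Theorem~\ref{mainthm} with this choice of $C$ gives
\begin{align*}
 \sup_{t\geq 0}\left\vert\langle x(t,0,B),v_k\rangle_2\right\vert \leq \left(\trace(v_k^T P v_k)\right)^{\frac{1}{2}} \exp\left\{0.5\left\|u^{0}\right\|_{L^2}^2\right\} \left\|u\right\|_{L^2},
\end{align*}
since the hypothesis~(\ref{assumption}) is exactly what is assumed in the corollary, and $P$ does not depend on $C$.

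The remaining step is purely a matter of simplifying $\trace(v_k^T P v_k)$. Since $v_k^T P v_k$ is a scalar, the trace is redundant, and because $v_k$ is a unit eigenvector of $P$ with eigenvalue $\lambda_k$, we have $P v_k = \lambda_k v_k$ and hence $v_k^T P v_k = \lambda_k \langle v_k, v_k\rangle_2 = \lambda_k$. Substituting this into the bound above yields exactly the claimed inequality. One may also note in passing that $\lambda_k\geq 0$, so the square root is well defined: this follows because $P$ is positive semidefinite, being the limit of the increasing sequence $P_t=\int_0^t \bar Z(s,BB^T)\,ds$ of positive semidefinite matrices (each $\bar Z(s,BB^T)\geq 0$ by Lemma~\ref{lem2} applied with $Z_0=BB^T\geq 0$, or directly via the stochastic representation in Proposition~\ref{prop_stoch_rep}).

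I do not anticipate a genuine obstacle here; the corollary is a specialization, and the only thing to be careful about is making the reduction to the scalar-output case explicit and confirming that nothing in the proof of Theorem~\ref{mainthm} required $C$ to have more than one row or to have full rank. The orthonormality of the basis $(v_k)$ is used only through the single relation $\|v_k\|_2=1$; the fact that the $v_k$ form a basis is not needed for the pointwise estimate, though it is what makes the family of estimates a meaningful characterization of which directions in state space are hard to reach (small $\lambda_k$ forces $\langle x(t,0,B),v_k\rangle_2$ to stay small for all admissible $u$, hence a large control energy is required to move the state appreciably in the $v_k$ direction).
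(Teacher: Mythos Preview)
Your proposal is correct and matches the paper's own proof essentially verbatim: the paper simply sets $C=v_k^T$ in Theorem~\ref{mainthm}, reads off $y(t)=\langle x(t,0,B),v_k\rangle_2$, and uses orthonormality to get $\trace(CPC^T)=\lambda_k$. Your additional remarks on $\lambda_k\geq 0$ and on which part of orthonormality is actually used are accurate but go beyond what the paper records.
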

\begin{proof}
We set $C=v_k^T$ in Theorem \ref{mainthm}. We then obtain $y(t) = C x(t, 0, B) = \langle x(t, 0, B), v_k\rangle_2$ and $\trace(C P C^T) = \lambda_k$ since the eigenvectors are orthonormal.
\end{proof}\\
Using an orthonormal basis $(v_k)_{k=1, \ldots, n}$ of eigenvectors of $P$, we can write \begin{align*}
x(t, 0, B) = \sum_{k=1}^n \langle x(t, 0, B), v_k \rangle_2\; v_k.
 \end{align*}
If the control $u$ is not too large and if the eigenvalue $\lambda_k$ corresponding to $v_k$ is small, then the Fourier coefficient
$\langle x(\cdot, 0, B), v_{k}  \rangle_2$ is close to zero according to Corollary \ref{correach}. This means that the state variable takes only very small values in the direction of $v_{k}$. 
States with a larger component in this direction are not relevant in this case. In order to reach a state with a large component in the eigenspaces of $P$ belonging to the small eigenvalues, a larger control needs to be used. 
A similar estimate as in Corollary \ref{correach} has already been obtained for a different reachability Gramian \cite{redstochbil,redmanntypeiibilinear}. \smallskip

Based on the result in Theorem \ref{mainthm}, a bound for the output error between systems (\ref{sys:original}) and (\ref{sys:reduced}) is derived now.
\begin{kor}\label{corh2error}
Let $y$ be the output of system (\ref{sys:original}) with $x_0=0$ satisfying (\ref{assumption}). Moreover, let $\hat y$ be the output of the reduced system (\ref{sys:reduced}) with $\hat x_0=0$ and
 \begin{align}\label{assumptionred}
 \sigma(\hat A\otimes I+I\otimes \hat A+ \sum_{k=1}^m \hat N_k \otimes \hat N_k)\subset \mathbb C_-.  
\end{align}
Then, we have \begin{align}\label{rombound}
  \sup_{t\geq 0}\left\|y(t) - \hat y(t)\right\|_2 \leq \left(\trace(C P C^T) + \trace(\hat C \hat P \hat C^T)- 2 \trace(C P_g \hat C^T)\right)^{\frac{1}{2}} \exp\left\{0.5\left\|u^{0}\right\|_{L^2}^2\right\} \left\|u\right\|_{L^2},                   
                    \end{align}
where $P$ solves (\ref{lyapeq}) and $\hat P$, $P_g$ are the solutions to  \begin{align}\label{lyapeqred}
                  \hat A \hat P + \hat P \hat A^T + \sum_{k=1}^m \hat N_k \hat P \hat N_k^T &= - \hat B \hat B^T,\\ \label{lyapmix}
                  A P_g + P_g \hat A^T + \sum_{k=1}^m N_k P_g \hat N_k^T &= - B \hat B^T.
                  \end{align}
The trace expression in (\ref{rombound}) represents the $\mathcal{H}_2$-error between systems (\ref{sys:original}) and (\ref{sys:reduced}), i.e., \begin{align*}
\left\|\Sigma - \hat \Sigma\right\|_{\mathcal H_2}^2 = \trace(C P C^T) + \trace(\hat C \hat P \hat C^T)- 2 \trace(C P_g \hat C^T).
\end{align*}
\end{kor}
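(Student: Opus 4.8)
The plan is to reduce Corollary \ref{corh2error} to an application of Theorem \ref{mainthm} by building the error system as a single bilinear system and then identifying the resulting Gramian expression with the claimed $\mathcal H_2$-error formula. First I would form the \emph{augmented} (error) system whose state is $\bar x = \begin{bmatrix} x \\ \hat x\end{bmatrix}$, with block matrices
\begin{align*}
\bar A = \begin{bmatrix} A & 0 \\ 0 & \hat A \end{bmatrix},\quad \bar N_k = \begin{bmatrix} N_k & 0 \\ 0 & \hat N_k \end{bmatrix},\quad \bar B = \begin{bmatrix} B \\ \hat B \end{bmatrix},\quad \bar C = \begin{bmatrix} C & -\hat C \end{bmatrix},
\end{align*}
driven by the same input $u$ with zero initial state. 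Then $\bar y(t) = \bar C \bar x(t) = C x(t) - \hat C \hat x(t) = y(t) - \hat y(t)$, and the ``$u^0$'' vector for the augmented system is the same as for $\Sigma$ (since the nonzero pattern among the $\bar N_k$ is the union of the patterns of $N_k$ and $\hat N_k$, but the bound only uses those $u_k$ with $N_k\neq 0$ for $\Sigma$ — here I need a small remark that enlarging $u^0$ only weakens the bound, so we may use the larger support coming from the $\bar N_k$, or assume w.l.o.g.\ the supports agree).

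Next I would check the hypothesis of Theorem \ref{mainthm} for the augmented system: the relevant matrix $\bar A\otimes I + I\otimes \bar A + \sum_k \bar N_k\otimes \bar N_k$ is, after a suitable permutation, block \emph{triangular} (in fact it decomposes into the $\Sigma$-block, the $\hat\Sigma$-block, and a cross block of the form $A\otimes I + I\otimes \hat A + \sum_k N_k\otimes\hat N_k$), so its spectrum is the union of the spectra of these three blocks. The first two lie in $\mathbb C_-$ by \eqref{assumption} and \eqref{assumptionred}; the cross block has spectrum equal to $\{\alpha+\beta : \alpha\in\sigma(\cdots_\Sigma),\ \beta\in\sigma(\cdots_{\hat\Sigma}) \text{ in an appropriate sense}\}$ — more carefully, one argues directly that the Sylvester-type equation \eqref{lyapmix} for $P_g$ has a unique solution because the spectra of the two diagonal blocks are separated from each other by being in the open left half-plane, or one invokes the standard fact that $\sigma(\cdot_\Sigma)\subset\mathbb C_-$ and $\sigma(\cdot_{\hat\Sigma})\subset\mathbb C_-$ imply the cross block is Hurwitz as well. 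Hence Theorem \ref{mainthm} applies and gives
\begin{align*}
\sup_{t\geq 0}\|y(t)-\hat y(t)\|_2 \leq \left(\trace(\bar C \bar P \bar C^T)\right)^{1/2}\exp\{0.5\|u^0\|_{L^2}^2\}\|u\|_{L^2},
\end{align*}
where $\bar P$ solves $\bar A\bar P + \bar P\bar A^T + \sum_k \bar N_k\bar P\bar N_k^T = -\bar B\bar B^T$.

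The remaining step is the purely algebraic identification $\trace(\bar C\bar P\bar C^T) = \trace(CPC^T) + \trace(\hat C\hat P\hat C^T) - 2\trace(C P_g\hat C^T)$. I would partition $\bar P = \begin{bmatrix} P_{11} & P_{12} \\ P_{12}^T & P_{22}\end{bmatrix}$ conformally; substituting the block structure of $\bar A,\bar N_k,\bar B$ into the Lyapunov equation and reading off the $(1,1)$, $(2,2)$, $(1,2)$ blocks yields exactly \eqref{lyapeq} for $P_{11}$, \eqref{lyapeqred} for $P_{22}$, and \eqref{lyapmix} for $P_{12}$; by uniqueness of the solutions to these (Lyapunov/Sylvester) equations we get $P_{11}=P$, $P_{22}=\hat P$, $P_{12}=P_g$. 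Then $\bar C\bar P\bar C^T = CP C^T - C P_g\hat C^T - \hat C P_g^T C^T + \hat C\hat P\hat C^T$, and taking traces (using $\trace(\hat C P_g^T C^T) = \trace(C P_g\hat C^T)$) gives the claim. Finally, the identification of this trace expression with $\|\Sigma-\hat\Sigma\|_{\mathcal H_2}^2$ follows from Remark \ref{remark1} applied to the error system, noting $\|\Sigma-\hat\Sigma\|_{\mathcal H_2} = \|\bar\Sigma\|_{\mathcal H_2}$ where $\bar\Sigma$ is the augmented system. The main obstacle I anticipate is the spectral/solvability check for the cross block and $P_g$ — establishing that \eqref{assumption} and \eqref{assumptionred} together guarantee unique solvability of \eqref{lyapmix} and the Hurwitz property needed to apply Theorem \ref{mainthm} to $\bar\Sigma$; everything else is bookkeeping with block matrices.
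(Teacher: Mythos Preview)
Your overall strategy---form the block-diagonal error system and apply the output bound to it, then read off the blocks of the resulting Gramian---is exactly what the paper does. The block identification $P_{11}=P$, $P_{22}=\hat P$, $P_{12}=P_g$ and the trace expansion are handled identically. The difference lies in how existence of the error Gramian $\bar P$ is secured, and this is where your proposal is shaky.

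You want to invoke Theorem \ref{mainthm} as a black box, which requires the spectral condition for $\bar A\otimes I+I\otimes\bar A+\sum_k\bar N_k\otimes\bar N_k$. After permutation this operator is block \emph{diagonal} (not merely triangular) with four blocks; the $(1,1)$ and $(2,2)$ blocks are covered by \eqref{assumption} and \eqref{assumptionred}, but your argument for the cross block $\hat A\otimes I_n+I_r\otimes A+\sum_k\hat N_k\otimes N_k$ does not work: the sum-of-eigenvalues description applies to $A\otimes I+I\otimes B$ alone and breaks once the mixed terms $\sum_k\hat N_k\otimes N_k$ are added, and there is no off-the-shelf ``standard fact'' here. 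The gap can be closed via the Lyapunov characterization for resolvent positive operators (cf.\ \cite{damm}, Appendix \ref{appendixbla}): \eqref{assumption} and \eqref{assumptionred} furnish $Q,\hat Q>0$ with $A^TQ+QA+\sum_k N_k^TQN_k<0$ and its hatted analogue; then the block-diagonal matrix $\bar Q=\left[\begin{smallmatrix}Q&0\\0&\hat Q\end{smallmatrix}\right]>0$ satisfies $\bar A^T\bar Q+\bar Q\bar A+\sum_k\bar N_k^T\bar Q\bar N_k<0$, which certifies the spectral condition for the error system and hence for each of its decoupled blocks.

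The paper sidesteps this question entirely. Rather than applying Theorem \ref{mainthm} directly, it reruns that proof for the error system up to the time-limited estimate analogous to \eqref{lastest}, involving $P_t^e=\int_0^t\bar Z^e(s)\,ds$. Convergence as $t\to\infty$ then comes from positive semidefiniteness: $\|\bar Z^e(t)\|_2\leq\trace\bar Z^e(t)=\trace\bar Z_{11}(t)+\trace\bar Z_{22}(t)$, and the diagonal blocks evolve independently under the $\Sigma$- and $\hat\Sigma$-operators, hence decay exponentially by \eqref{assumption} and \eqref{assumptionred} separately. This trace trick controls the off-diagonal block for free without ever analyzing the cross Sylvester operator. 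Both routes are valid once yours is patched; the paper's is more self-contained, while yours is a cleaner one-line reduction. Your caution about $u^0$ for the error system possibly exceeding that of $\Sigma$ is well taken; the paper tacitly assumes $N_k=0\Rightarrow\hat N_k=0$, which holds in the MOR settings it treats.
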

\begin{proof}
We define the error state $x^e$ and the error matrices $(A^e, B^e, C^e, N_k^e)$ as follows:
\begin{align*}
x^e =\smat x \\ \hat x \srix,\; A^e = \smat{A}& 0\\ 
0 &\hat A\srix,\;B^e=\smat B \\ \hat B\srix,\;C^e= \smat C & -\hat C \srix,\; N_k^e=\smat {N}_k& 0 \\ 
0 &\hat N_k\srix.
            \end{align*} 
It is not hard to see that $x^e$ satisfies the state equation of system \begin{equation*}
\begin{aligned}
\dot x^e (t) &= A^e x^e(t) +  B^e u(t) + \sum_{k=1}^m N_k^e x^e(t) u_k(t),\quad x^e(0) = 0,\\
y^e(t) &= C^e x^e(t),\quad t\geq 0,
\end{aligned}
\end{equation*}
and the corresponding output $y^e$ coincides with the output error between (\ref{sys:original}) and (\ref{sys:reduced}), i.e., $y^e = y - \hat y$. With the same steps as in the proof of Theorem \ref{mainthm}, we find 
analogous to (\ref{lastest}) that \begin{align}\label{blaaaa}
\left\|y^e(t)\right\|_2\leq \exp\left\{0.5 \int_0^t \left\|u^{0}(s)\right\|_2^2 ds\right\} \left(\trace(C P^e_t C^T)\right)^{\frac{1}{2}} \left(\int_0^t \left\|u(s)\right\|_2^2 ds\right)^{\frac{1}{2}},
\end{align}
where $P^e_t = \int_0^t \bar Z^e(s) ds$ with $\bar Z^e$ satisfying
\begin{align}\label{matrix_eq_sinu_error}
 \dot {\bar Z}^e(t) = A^e {\bar Z}^e(t) + {\bar Z}^e(t) (A^e)^T +\sum_{k=1}^m N_k^e {\bar Z}^e(t) (N^e_k)^T ,\quad {\bar Z}^e(0) = B^e(B^e)^T.
\end{align}
To prove the existence of $P^e = \int_0^\infty {\bar Z}^e(s) ds$, we partition ${\bar Z}^e = \smat {\bar Z}_{11}& {\bar Z}_{12}\\ {\bar Z}_{12}^T & {\bar Z}_{22}\srix$. Evaluating the left upper and the right lower block of 
(\ref{matrix_eq_sinu_error}), we see that ${\bar Z}_{11}$ solves (\ref{matrix_eq_sinu}) and ${\bar Z}_{22}$ the same equation, where $(A, B, N_k)$ are replaced by the reduced coefficients $(\hat A, \hat B, \hat N_k)$. With 
the arguments of the proof of Theorem \ref{mainthm}, the assumptions (\ref{assumption})  and (\ref{assumptionred}) imply $\left\|{\bar Z}_{11}(t)\right\|_2, \left\|{\bar Z}_{22}(t)\right\|_2\lesssim \expn^{-k t}$ for 
some $k>0$. Since ${\bar Z}^e$ is positive semidefinite, it therefore holds that \begin{align*}
  \left\|{\bar Z}^e(t)\right\|_2 &=   \left\|[{\bar Z}^e(t)]^{\frac{1}{2}} [{\bar Z}^e(t)]^{\frac{1}{2}}\right\|_2   \leq \left\|[{\bar Z}^e(t)]^{\frac{1}{2}}\right\|_2^2  \leq \left\|[{\bar Z}^e(t)]^{\frac{1}{2}}\right\|^2_F  
  =\trace\left({\bar Z}^e(t)\right) \\&= \trace\left({\bar Z}_{11}(t)\right) + \trace\left({\bar Z}_{22}(t)\right) \lesssim \expn^{-k t}.
                                                     \end{align*}
This gives us the existence of $P^e = \smat P_{11}& P_{12}\\ 
P_{12}^T & P_{22}\srix$ which according to Theorem \ref{mainthm} satisfies 
\begin{align}\label{errorlyapeq}
                  A^e P^e + P^e (A^e)^T + \sum_{k=1}^m N^e_k P^e (N^e_k)^T = - B^e (B^e)^T.
                                                                                              \end{align}
Evaluating the respective blocks of (\ref{errorlyapeq}), we find $P_{11} = P$, $P_{12} = P_g$ and $P_{22} = \hat P$. We can now take the supremum on both sides of (\ref{blaaaa}) and obtain
\begin{align*}
  \sup_{t\geq 0}\left\|y^e(t)\right\|_2 \leq \left(\trace(C^e P^e (C^e)^T)\right)^{\frac{1}{2}} \exp\left\{0.5\left\|u^{0}\right\|_{L^2}^2\right\} \left\|u\right\|_{L^2}.                  
                    \end{align*}
Using the partitions of $C^e$ and $P^e$, we have $\trace(C^e P^e (C^e)^T) = \trace(C P C^T) + \trace(\hat C \hat P \hat C^T)- 2 \trace(C P_g \hat C^T)$. It was discussed in \cite{morZhaL02} that 
this expression coincides with $\left\|\Sigma - \hat \Sigma\right\|_{\mathcal H_2}^2$. This concludes the proof.
\end{proof}\\
Notice that the bound in (\ref{rombound}) can potentially be large due to the exponential term if the control energy is large. This can, e.g., 
happen if the original system has to be rescaled with a constant $\gamma$ (see Remark \ref{remark1}) in order to guarantee (\ref{assumption}).
We can also see from Corollary \ref{corh2error} that control components $u_k$ with $N_k \ne 0$ have a much larger impact on the bound because their energy enters exponentially. 
Later we will discuss balancing related MOR schemes and prove their error bounds based on (\ref{rombound}). For those methods (\ref{assumptionred}) usually 
follows automatically from (\ref{assumption}).

\subsection{Asymptotic stability in bilinear systems}

We conclude this section with a probabilistic proof on asymptotic stability for bilinear systems, which is a consequence of Lemmas \ref{lem1}, \ref{lem2} and \ref{lem3}.
\begin{thm}\label{asymptstabstoch}
Let $x(t, x_0, 0)$, $t\geq 0$, denote the solution to the homogeneous bilinear equation \begin{align}\label{stabhosys}
\dot x(t) = Ax(t) + \sum_{k=1}^m N_k x(t) u_k(t),\quad x(0)=x_0.
\end{align}
If $\sigma(A)\subset \mathbb C_-$, then there exit $\gamma, k_1, k_2 >0$ such that \begin{align*}
\left\|x(t, x_0, 0)\right\|_2^2  \leq  \exp\left\{\gamma^2\left\|u^{0}\right\|_{L^2}^2\right\} \left\|x_0\right\|_2^2 k_1 \expn^{-k_2 t}                                                                                   
                                                                                    \end{align*}
for all $u\in L^2$, i.e., the bilinear equation is asymptotically stable with exponential decay.                                                                                  
\end{thm}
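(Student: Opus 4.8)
The plan is to reuse the rescaling of the bilinear state equation from Remark \ref{remark1} to force the hypothesis of Theorem \ref{mainthm}, and then to combine the essential estimate \eqref{essentialest} with the stochastic representation of Proposition \ref{prop_stoch_rep} and the mean square stability of Lemma \ref{lemstab}. First I would fix $\gamma>0$ large. Since $A$ is Hurwitz, $\mathcal I:=\int_0^\infty \left\|\expn^{At}\right\|_2^2\,dt<\infty$; choosing $\gamma>\sqrt{\sum_{k=1}^m\left\|N_k\right\|_2^2\,\mathcal I}$ makes the rescaled noise coefficients $\tilde N_k:=\frac1\gamma N_k$ satisfy $\left\|\int_0^\infty \expn^{At}\left(\sum_{k=1}^m\tilde N_k\tilde N_k^T\right)\expn^{A^Tt}\,dt\right\|_2<1$, exactly as in Remark \ref{remark1}. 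By the last part of Lemma \ref{lemstab} this gives $\sigma(A\otimes I+I\otimes A+\sum_k\tilde N_k\otimes\tilde N_k)\subset\mathbb C_-$, hence the stochastic equation $dz_\gamma(t)=Az_\gamma(t)\,dt+\sum_k\tilde N_kz_\gamma(t)\,dw_k(t)$, $z_\gamma(0)=x_0$, is exponentially mean square stable: there are $k_1,k_2>0$ with $\mathbb E\left\|z_\gamma(t,x_0)\right\|_2^2\le\left\|x_0\right\|_2^2k_1\expn^{-k_2t}$ for all $t\ge0$.

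Next I would observe that \eqref{stabhosys} is literally the homogeneous equation \eqref{hom_bil_eq} written with coefficient matrices $\tilde N_k$ and control components $\gamma u_k$; since $\tilde N_k=0$ exactly when $N_k=0$, the associated ``active'' control vector in the sense of \eqref{uzero} is $\gamma u^{0}$. Applying Lemmas \ref{lem1}, \ref{lem2} and \ref{lem3} to this rescaled representation with $s=0$ and $Z_0=x_0x_0^T\ge0$ --- i.e. repeating the argument that produced \eqref{essentialest} --- yields
\[
 x(t,x_0,0)\,x^T(t,x_0,0)\le\exp\Big\{\gamma^2\int_0^t\left\|u^{0}(\tau)\right\|_2^2\,d\tau\Big\}\,\bar Z_\gamma(t,x_0x_0^T),
\]
where $\bar Z_\gamma$ solves \eqref{matrix_eq_sinu} with $N_k$ replaced by $\tilde N_k$. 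By Proposition \ref{prop_stoch_rep}, $\bar Z_\gamma(t,x_0x_0^T)=\mathbb E\!\left[z_\gamma(t,x_0)z_\gamma^T(t,x_0)\right]$.

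Finally I would take the trace of the displayed matrix inequality, use that the exponential factor is nondecreasing in $t$ and bounded by $\exp\{\gamma^2\left\|u^{0}\right\|_{L^2}^2\}$, and insert the mean square bound from Lemma \ref{lemstab}:
\[
 \left\|x(t,x_0,0)\right\|_2^2=\trace\!\big(x(t,x_0,0)x^T(t,x_0,0)\big)\le\exp\{\gamma^2\left\|u^{0}\right\|_{L^2}^2\}\,\mathbb E\left\|z_\gamma(t,x_0)\right\|_2^2\le\exp\{\gamma^2\left\|u^{0}\right\|_{L^2}^2\}\,\left\|x_0\right\|_2^2k_1\expn^{-k_2t},
\]
which is the assertion. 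The one genuine obstacle is producing a $\gamma$ for which the scaled system is mean square stable, and that is already carried out verbatim in Remark \ref{remark1}; the rest is trace and monotonicity bookkeeping on top of Lemmas \ref{lem1}--\ref{lem3}, Proposition \ref{prop_stoch_rep} and Lemma \ref{lemstab}. The points I would be most careful about are that $x_0x_0^T$ is a legitimate (symmetric positive semidefinite) initial datum for Lemma \ref{lem2}, and that the growth factor inherited from the rescaling is $\gamma^2\left\|u^{0}\right\|_{L^2}^2$ --- precisely the $\gamma$-dependence appearing in the statement --- rather than $\left\|u\right\|_{L^2}^2$.
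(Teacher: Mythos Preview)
Your proposal is correct and follows essentially the same approach as the paper: rescale with $\tilde N_k=\frac{1}{\gamma}N_k$ and input $\gamma u$, choose $\gamma$ via Remark \ref{remark1} so that Lemma \ref{lemstab} yields mean square stability of the associated stochastic equation, apply \eqref{essentialest} together with Proposition \ref{prop_stoch_rep}, and take the trace. The only difference is the order of presentation (you fix $\gamma$ before invoking the estimate, the paper afterwards), which is immaterial.
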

\begin{proof}
We can equivalently rewrite equation (\ref{stabhosys}) as \begin{align*}
   \dot x(t) = Ax(t) + \sum_{k=1}^m [\frac{1}{\gamma} N_k] x(t) [\gamma u_k(t)]                                                        
                                                          \end{align*}
as explained in Remark \ref{remark1}. We set $\tilde N_k:=\frac{1}{\gamma} N_k$. We define the corresponding stochastic equation by \begin{align*}
   d\tilde z(t) = A\tilde z(t)dt + \sum_{k=1}^m \tilde N_k \tilde z(t) dw_k(t),\quad \tilde z(0)=x_0,                                                       
                                                          \end{align*}
and denote its solution by $\tilde z(t, x_0)$. Lemmas \ref{lem1}, \ref{lem2}, \ref{lem3} and Proposition \ref{prop_stoch_rep} imply the relation between $x$ and $\tilde z$ given in (\ref{essentialest_stoch}),
only $u$ is replaced by the rescaled input $\gamma u$. Applying the trace operator to both sides of (\ref{essentialest_stoch}), the inequality is preserved and we obtain
\begin{align*}
x^T(t, x_0, 0) x(t, x_0, 0)\leq \exp\left\{ \gamma^2 \int_0^t\left\|u^{0}(\tau)\right\|_2^2 d\tau\right\}\mathbb E\left[\tilde z^T(t, x_0) \tilde z(t, x_0)\right].
\end{align*}
We enlarge the right side of this inequality by replacing $\int_0^t\left\|u^{0}(\tau)\right\|_2^2d\tau$ by $\left\|u^{0}\right\|_{L_2}^2$. 
Now we can choose $\gamma > \sqrt{\sum_{k=1}^m \left\| N_k \right\|_2^2 \int_0^\infty \left\|\expn^{A t}\right\|_2^2  dt}$. According to Remark \ref{remark1} this implies (\ref{scalestab}). By Lemma \ref{lemstab},
we therefore know that there exist $k_1, k_2>0$, such that 
\begin{align*}
\mathbb E \left[\tilde z^T(t, x_0) \tilde z(t, x_0)\right] \leq \left\|x_0\right\|^2_2 k_1 \expn^{-k_2 t}.
\end{align*}
This concludes the proof.
\end{proof}\\
It is interesting to notice that adding a bilinearity to an asymptotically stable linear system $\dot x(t) = Ax(t)$ preserves this stability condition. The additional bilinear term only enlarges the constant but does not change the decay. Notice that the result of Theorem \ref{asymptstabstoch} can be proved using the deterministic concept of integral-input to state stability (IISS) \cite{sontag}. IISS is equivalent to $\sigma(A)\subset \mathbb C_-$ in bilinear systems and implies Theorem \ref{asymptstabstoch} if $u\in L^2$. The result can be extended to $x(t, x_0, 0)\rightarrow 0$ as $t\rightarrow \infty$ for any $u\in L^q$, $1\leq q <\infty$, using IISS arguments. 

\section{Consequences of the $\mathcal H_2$-bound in the context of MOR}

The bound in Corollary \ref{corh2error} has various important consequences for MOR schemes applied to bilinear systems. Based on this bound, we are able to explain why $\mathcal{H}_2$-optimal MOR techniques lead to small output 
errors. Moreover, we can prove output error bounds for both BT and SPA which allow us to point out the cases in which both methods yield a good approximation.
These error bounds are derived from known results in the error analysis for stochastic systems by the link that Corollary \ref{corh2error} provides. 
Throughout this section, we will assume (\ref{assumption}). We know that this is achieved if $\sigma(A)\subset \mathbb C_-$ and the bilinear system is rescaled according to Remark \ref{remark1}. Moreover, we assume 
(\ref{assumptionred}) if it is not automatically given through (\ref{assumption}). This then guarantees existence of the bound in Corollary \ref{corh2error}.

\subsection{$\mathcal H_2$-optimal MOR}\label{sec:optimality_condtions}

The error bound in Corollary \ref{corh2error} shows that we can expect a small output error if we find a reduced system that leads to a small 
$\mathcal E^2 := \trace(C P C^T) + \trace(\hat C \hat P \hat C^T)- 2 \trace(C P_g \hat C^T)$ (in case the control $u$ is not too large). Consequently, one can expect a good ROM when 
$\mathcal E$ is minimized with respect to $\hat A, \hat B, \hat C$ and $\hat N_k$. As mentioned in the previous section, 
$\mathcal E$ is the $\mathcal H_2$-error between systems (\ref{sys:original}) and (\ref{sys:reduced}). Necessary conditions for a locally optimal $\mathcal H_2$-error have already been provided \cite{morZhaL02}. 
These are \begin{align}\label{optcond}
           \hat C \hat P = C P_{g},\quad \hat Q\hat B  =  Q_{g} B,\quad \hat Q\hat  P = Q_{g} P_{g},\quad \hat Q \hat N_k \hat  P = Q_{g} N_k P_{g},    
          \end{align}
$k\in\left\{1,\ldots, m\right\}$, where $\hat P, P_g$ are the solutions to (\ref{lyapeqred}) and (\ref{lyapmix}). Moreover, $\hat Q, Q_g$ satisfy 
 \begin{align*}
                  \hat A^T \hat Q + \hat Q \hat A + \sum_{k=1}^m \hat N_k^T \hat Q \hat N_k &= - \hat C^T \hat C,\\ 
                  \hat A^T Q_g + Q_g A + \sum_{k=1}^m \hat N_k^T Q_g N_k &= - \hat C^T C.
                  \end{align*}
By Corollary \ref{corh2error} it is now clear that choosing reduced systems (\ref{sys:reduced}) satisfying (\ref{optcond}) is meaningful in terms of the output error. This is a new insight since the link between the 
output and the $\mathcal H_2$-error was not previously known. Such $\mathcal H_2$-optimal ROMs are, e.g., derived through generalized Sylvester iterations, see Algorithm 1 in \cite{morBenB12b}.
Another very famous representative of $\mathcal H_2$-optimal schemes is the bilinear iterative rational Krylov algorithm (IRKA), see Algorithm \ref{algo:IRKA}. Due to a reformulation of (\ref{optcond}), it could be shown in 
\cite{morBenB12b} that bilinear IRKA satisfies the necessary optimality conditions.
\begin{algorithm}[!tb]
	\caption{Bilinear IRKA}
	\label{algo:IRKA}
	\begin{algorithmic}[1]
		\Statex {\bf Input:} The system matrices: $ A, B, C, N_k$.
		\Statex {\bf Output:} The reduced matrices: $\hat A, \hat B,\hat C, \hat N_k$.
		\State Make an initial guess for the reduced matrices $\hat A, \hat B,\hat C, \hat N_k$.
\While {not converged}
		\State Perform the spectral decomposition of $\hA$ and define:
		\Statex\quad\qquad $D = S\hA S^{-1},~\tB = S\hB, ~\tC = \hC S^{-1}, ~\tN_k = S\hN_k S^{-1}. $
		\State Solve for $V$ and $W$:
		
		\Statex \quad\qquad$ -V D  -  AV -\sum_{k=1}^m N_k V \tN_k^T  = B\tB^T$,
		\Statex \quad\qquad$ -W D  -  A^T W -\sum_{k=1}^m N_k^T W \tN_k = C^T\tC$.
\State $V = \orth{(V)}$ and $W = \orth{(W)}$, where $\orth{(\cdot)}$ returns an orthonormal basis for the range of a matrix.
		\State Determine the reduced matrices:
		\Statex \quad\qquad$\hA = (W^T V)^{-1}W^T AV,\;  \hB = (W^T V)^{-1}W^T B,\;\hC = CV$, \; $\hN_k = (W^T V)^{-1}W^T N_kV$.
		\EndWhile
	\end{algorithmic}
\end{algorithm}

\subsection{Error bounds for balancing related MOR techniques applied to bilinear systems}\label{sectionBT_SPA}

We explain the procedure of balancing related MOR first, before we show the error bounds for two particular methods. These are balanced truncation (BT) and singular perturbation approximation (SPA).
States that require a lager amount of energy to be reached (hard to reach states) can be identified through the Gramian 
$P$ solving (\ref{lyapeq}) using Corollary \ref{correach}. We refer to \cite{morBenD11, enefungray98} for alternative characterizations based on $P$ and to \cite{redstochbil,redmanntypeiibilinear}
for estimates based on an alternative reachability Gramian. States that produce only a small amount of observation energy (hard to observe states)
can be found through an observability Gramian $Q$ \cite{morBenD11, enefungray98, redstochbil}  satisfying 
\begin{align}\label{lyapeqdual}
                  A^T Q + Q A + \sum_{k=1}^m N_k^T Q N_k = - C^T C.
                                                                                              \end{align}
The goal is to remove the hard to reach and observe states that are contained in the eigenspaces of $P$ and $Q$, respectively, corresponding to the small eigenvalues. This is done by 
simultaneously diagonalizing $P$ and $Q$ such that they are equal and diagonal. Subsequently, the states contributing only very little to the systems dynamics are neglected.\smallskip

In detail, the procedure works as as follows. Assuming $P, Q >0$, we choose a regular state space transformation $S\in \mathbb R^n$ given by 
\begin{align}\label{balancedtrans}
S=\Sigma^{-\frac{1}{2}}U^T L^T \quad\text{and}\quad S^{-1}=KV\Sigma^{-\frac{1}{2}},
\end{align}
where $\Sigma=\diag(\sigma_{1},\ldots,\sigma_n)>0$ with diagonal entries being the square root of eigenvalues of $PQ$. These diagonal entries are called Hankel singular values (HSVs). The other ingredients of the transformation 
$S$ are computed in the following way. Let us factorize $P = KK^T$ and $Q=LL^T$, then a singular value decomposition of $K^TL = V\Sigma U^T$ gives the required matrices. We now introduce a transformed state 
\begin{align*}x_b(t) = Sx(t) = \smat x_1(t) \\x_2(t)\srix,\end{align*}
where $x_1(t)\in\mathbb R^r$. The transformed state $x_b$ satisfies a bilinear system with the same output as (\ref{sys:original}) having the following coefficients\begin{align}\label{partmatrices}
S{A}S^{-1}= \smat{A}_{11}&{A}_{12}\\ 
{A}_{21}&{A}_{22}\srix,\quad S{B} = \smat{B}_1\\ {B}_2\srix,\quad  
{CS^{-1}} = \smat{C}_1 &{C}_2\srix,\quad S{N_k}S^{-1}= \smat {N}_{k, 11}&{N}_{k, 12}\\ 
{N}_{k, 21}&{N}_{k, 22}\srix, \end{align}
where ${A}_{11}\in\R^{r\times r}$ etc. Using the above partitions, this system is \begin{equation}\label{sys:original_trans}
\begin{aligned}
\smat {\dot x}_1(t) \\ {\dot x}_2(t)\srix &= \smat{A}_{11}&{A}_{12}\\ 
{A}_{21}&{A}_{22}\srix \smat x_1(t) \\x_2(t)\srix +  \smat{B}_1\\ {B}_2\srix u(t) + \sum_{k=1}^m \smat {N}_{k, 11}&{N}_{k, 12}\\ 
{N}_{k, 21}&{N}_{k, 22}\srix \smat x_1(t) \\x_2(t)\srix u_k(t),\\
y(t) &= \smat{C}_1 &{C}_2\srix \smat x_1(t) \\x_2(t)\srix,\quad t\geq 0.
\end{aligned}
\end{equation}
The Gramians $P_b$ and $Q_b$ of (\ref{sys:original_trans}) are  \begin{align*}
P_b =S P S^T = Q_b = S^{-T}QS^{-1}= \Sigma = \smat{\Sigma}_{1}& \\ 
 &{\Sigma}_{2}\srix
\end{align*}
with ${\Sigma}_{1}\in\R^{r\times r}$. ${\Sigma}_{2} = \diag(\sigma_{r+1},\ldots,\sigma_n)$ contains the $n-r$ smallest HSVs of the systems. The corresponding state variables $x_2$ are hence less important 
and can be neglected in the system dynamics since those represent the difficult to reach and observe states in (\ref{sys:original_trans}). In order to obtain a ROM, the second line in the state equation 
of (\ref{sys:original_trans}) is truncated. The remaining $x_2$ components in the first line of the state equation and in the output equation can now be approximated in two ways. One is setting 
$x_2(t) = 0$. This method is called BT and the ROM (\ref{sys:reduced}) then has coefficients
\begin{align}\label{btmat}
(\hat A, \hat  B, \hat C,\hat  N_{k})=(A_{11}, B_1, C_1, N_{k, 11}).
\end{align}
An alternative method is SPA where one sets $x_2(t) = - A_{22}^{-1} A_{21} x_1(t)$. This results in a ROM with matrices
\begin{align}\label{spamat}
(\hat A, \hat  B, \hat C,\hat  N_{k})=(\bar A, B_1, \bar C, \bar N_k),
\end{align}
where we define
\begin{align*}
 \bar A:=A_{11}- A_{12} A_{22}^{-1} A_{21},\quad \bar C:=C_1-C_2 A_{22}^{-1} A_{21},\quad \bar N_k:=N_{k,11}-N_{k, 12} A_{22}^{-1} A_{21}.
      \end{align*}
We refer to \cite{hartmann} for more details about SPA. There, the reduced system with matrices as in (\ref{spamat}) is derived through an averaging principle. In the following,
we present $L^\infty$-error bounds for both BT and SPA. Both results are new and the first ones for balancing related methods based on the $\mathcal H_2$-error.
However, we want to mention that there is an alternative
bound for BT in infinite dimensions \cite{beckerhartmann} and there are $L^2$-error bounds for BT and SPA based on a different reachability
Gramian $P_2$ \cite{redstochbil, redmannstochbilspa}. $P_2$ is defined to be a positive definite solution to \begin{align}\label{reachlyaptype2}
 A^T P_2^{-1}+P_2^{-1}A+\sum_{k=1}^m N_k^T P_2^{-1} N_k \leq -P_2^{-1}BB^T P_2^{-1}. \end{align}
Replacing $P$ by $P_2$ is also called type II approach. \smallskip 

For simplicity of the notation, we from now on assume that system (\ref{sys:original}) is already balanced, i.e., 
we already applied the balancing transformation in (\ref{balancedtrans}) such that $P = Q = \Sigma = \diag(\sigma_{1},\ldots,\sigma_n)$. We formulate the error bound for BT first.
\begin{thm}[Error bound BT]\label{theoremerrorboundBT}
Let $(A, B, C, N_k)$ be a balanced realization of system (\ref{sys:original}) with partitions as in (\ref{partmatrices}). Suppose that 
$\hat y$ is the output of the reduced system with matrices given in (\ref{btmat}). Moreover, we assume that (\ref{assumption}) holds. Then, given zero initial conditions for both the full and the reduced system, we have
\begin{align*}
  \sup_{t\geq 0}\left\|y(t)-\hat y(t)\right\|_2 \leq \left(\trace(\Sigma_2 K_{BT})\right)^{\frac{1}{2}} \exp\left\{0.5\left\|u^{0}\right\|_{L^2}^2\right\} \left\|u\right\|_{L^2}                   
                    \end{align*}
with $\Sigma_2=\diag(\sigma_{r+1}, \ldots, \sigma_n)$ and the weighting matrix given by\begin{align*}
K_{BT} =  B_2 B_2^T+2  P_{g, 2}A_{21}^T +\sum_{k=1}^m (2 N_{k, 22} P_{g, 2}N_{k, 21}^T+2 N_{k, 21} P_{g, 1}N_{k, 21}^T-N_{k, 21} \hat P  N_{k, 21}^T),
\end{align*}
where $\hat P$ and $P_{g}=\smat P_{g, 1}\\ P_{g, 2}\srix$ satisfy (\ref{lyapeqred}) and (\ref{lyapmix}), respectively.
\end{thm}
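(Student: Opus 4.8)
The plan is to reduce this statement to Corollary \ref{corh2error} and then simplify the resulting $\mathcal H_2$-error expression using the special block structure of the balanced realization together with the BT reduced matrices \gref{btmat}. First I would check that the hypotheses of Corollary \ref{corh2error} are met: assumption \gref{assumption} holds by hypothesis, and I need to verify that the reduced system $(\hat A,\hat B,\hat C,\hat N_k)=(A_{11},B_1,C_1,N_{k,11})$ satisfies \gref{assumptionred}. This is the point alluded to at the end of Section \ref{sectionBT_SPA}, namely that for balancing related schemes \gref{assumptionred} follows automatically from \gref{assumption}; I would invoke that fact (it can be seen from the fact that the reduced Gramians are the leading principal blocks $\Sigma_1$ of the balanced Gramians, so the reduced reachability Lyapunov operator is still stable). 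Once Corollary \ref{corh2error} applies, we get exactly the bound in the theorem with the weighting matrix replaced by
\begin{align*}
\trace(C P C^T)+\trace(\hat C\hat P\hat C^T)-2\trace(C P_g\hat C^T),
\end{align*}
so the whole task becomes: show this trace expression equals $\trace(\Sigma_2 K_{BT})$.

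The key computation is to rewrite the three trace terms so that the $P=\Sigma=\diag(\Sigma_1,\Sigma_2)$ contribution is isolated and the $\Sigma_1$-parts cancel. Since the realization is balanced, $P=Q=\Sigma$, so $\trace(CPC^T)=\trace(C\Sigma C^T)$; but I would instead use the \emph{observability} Lyapunov equation \gref{lyapeqdual} with $Q=\Sigma$ to express $\trace(CPC^T)=-\trace\!\big(\Sigma(A^T\Sigma+\Sigma A+\sum_k N_k^T\Sigma N_k)\big)$-type identities, or more directly use that $C^TC = -(A^T\Sigma+\Sigma A+\sum_k N_k^T\Sigma N_k)$ to convert $\trace(CPC^T)$ into an expression in $\Sigma$, $A$, $N_k$ only. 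Similarly $\hat C^T\hat C = -(\hat A^T\Sigma_1+\Sigma_1\hat A+\sum_k\hat N_k^T\Sigma_1\hat N_k)$ from the reduced observability equation (again using that the reduced observability Gramian is $\Sigma_1$, which must be checked from \gref{lyapeqdual} by reading off its $(1,1)$-block). Substituting these into $\trace(CPC^T)+\trace(\hat C\hat P\hat C^T)-2\trace(CP_g\hat C^T)$ and using $\hat C=C_1$, $\hat N_k=N_{k,11}$, $P=\Sigma$, $\hat P$ solving \gref{lyapeqred}, and $P_g=\smat P_{g,1}\\ P_{g,2}\srix$ solving \gref{lyapmix}, the expression should collapse — after block-multiplying and using the $(1,1)$- and $(2,1)$-blocks of \gref{lyapmix} and of \gref{lyapeq} — to a trace against $\Sigma_2$ of exactly the combination $B_2B_2^T+2P_{g,2}A_{21}^T+\sum_k(2N_{k,22}P_{g,2}N_{k,21}^T+2N_{k,21}P_{g,1}N_{k,21}^T-N_{k,21}\hat P N_{k,21}^T)$. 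An alternative, and probably cleaner, route is to mimic the stochastic-systems derivation in \cite{redstochbil,redmannstochbilspa}: those papers derive the analogous weighting matrix $K_{BT}$ for the $L^2$-bound, and since Corollary \ref{corh2error} shows our $\mathcal E^2$ is the same $\mathcal H_2$-error object, the identical algebraic manipulation of the error Lyapunov equation \gref{errorlyapeq} applied to the balanced/BT blocks yields the same $K_{BT}$; I would cite and adapt that computation rather than redo it from scratch.

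The main obstacle I anticipate is purely bookkeeping: carefully tracking the $2\times2$ block structure through \gref{errorlyapeq} (equivalently through \gref{lyapeq}, \gref{lyapeqred}, \gref{lyapmix}) and verifying that all terms involving $\Sigma_1$, $A_{11}$, $N_{k,11}$, $A_{12}$, $B_1$, etc. cancel, leaving only the $\Sigma_2$-weighted remainder. In particular one must use the $(2,1)$-block of the mixed equation \gref{lyapmix}, which reads $A_{21}\Sigma_1 + A_{22}P_{g,2} + \ldots = -B_2\hat B^T$-type relation (with $\hat B=B_1$), to trade the awkward $A_{22}$-, $A_{21}$-terms for the $P_{g,2}A_{21}^T$ term appearing in $K_{BT}$; getting the factors of $2$ and the signs right on the $N_k$-terms (especially the $-N_{k,21}\hat P N_{k,21}^T$ term, which comes from the $(1,1)$-block of the reduced equation \gref{lyapeqred}) is where a sign error is most likely. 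I would organize this as: (i) apply Corollary \ref{corh2error}; (ii) state the block identities obtained from \gref{lyapeq}, \gref{lyapeqred}, \gref{lyapmix} with $P=\Sigma$; (iii) expand $\trace(C^eP^e(C^e)^T)$ and substitute; (iv) collect terms and identify $\trace(\Sigma_2 K_{BT})$.
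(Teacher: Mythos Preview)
Your overall plan matches the paper's proof exactly: invoke stability preservation for the reduced system, apply Corollary \ref{corh2error}, and then identify the trace expression with $\trace(\Sigma_2 K_{BT})$ by citing the already-done algebra from the stochastic setting. The paper cites \cite{redbendamm} for the first step and \cite[Proposition 4.6]{redmannbenner} for the second, so your ``cite and adapt'' route is precisely what is done.

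One caution on your direct-computation route and your stability heuristic: the claim that the reduced Gramians are the leading principal blocks $\Sigma_1$ is \emph{false} for bilinear BT. Reading off the $(1,1)$-block of \gref{lyapeqdual} with $Q=\Sigma$ gives
\[
A_{11}^T\Sigma_1+\Sigma_1 A_{11}+\sum_{k}\bigl(N_{k,11}^T\Sigma_1 N_{k,11}+N_{k,21}^T\Sigma_2 N_{k,21}\bigr)=-C_1^T C_1,
\]
which is \emph{not} the reduced observability equation because of the extra $N_{k,21}^T\Sigma_2 N_{k,21}$ term (and analogously for reachability). So you cannot justify \gref{assumptionred} this way, nor use ``$\hat Q=\Sigma_1$'' in the trace manipulation. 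Stability preservation under BT in this setting requires a genuinely different argument about resolvent positive operators, which is the content of \cite{redbendamm}; and the trace identity in \cite{redmannbenner} does not rely on $\hat P=\Sigma_1$ either but works directly with the block relations from \gref{lyapeq}, \gref{lyapeqred}, \gref{lyapmix}. Your step-by-step outline (i)--(iv) is still workable, but drop the $\hat Q=\Sigma_1$ shortcut and instead use only the block equations you listed.
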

\begin{proof}
Assumption (\ref{assumption}) implies (\ref{assumptionred}) in the case of BT, see \cite{redbendamm}, i.e., mean square asymptotic stability is preserved in the ROM. Consequently, the bound in 
Corollary \ref{corh2error} exists. So, it holds that \begin{align*}
  \sup_{t\geq 0}\left\|y(t) - \hat y(t)\right\|_2 \leq \left(\trace(C P C^T) + \trace(\hat C \hat P \hat C^T)- 2 \trace(C P_g \hat C^T)\right)^{\frac{1}{2}} \exp\left\{0.5\left\|u^{0}\right\|_{L^2}^2\right\} \left\|u\right\|_{L^2}.                   
                    \end{align*}
The trace expression in the above estimate has already been analyzed within the error bound analysis of stochastic systems. By \cite[Proposition 4.6]{redmannbenner}, we then have \begin{align*}
 \trace(C P C^T) + \trace(\hat C \hat P \hat C^T)- 2 \trace(C P_g \hat C^T) = \trace(\Sigma_2 K_{BT}).                                                                                                                                                                                   
                                                                                                                                                                                   \end{align*}
                   \end{proof}\\
Setting $N_k=0$ in Theorem \ref{theoremerrorboundBT} leads to the $\mathcal H_2$-error bound in the linear case \cite{morAnt05}. We now state the bound for SPA.                   
\begin{thm}[Error bound SPA]\label{theoremerrorboundSPA}
Let $(A, B, C, N_k)$ be a balanced realization of system (\ref{sys:original}) with partitions as in (\ref{partmatrices}). Suppose that 
$\hat y$ is the output of the reduced system with matrices given in (\ref{spamat}). Moreover, we assume that (\ref{assumption}) and  \begin{align}\label{stabpreservcond}
 0 \not\in \sigma(\hat A\otimes I+I\otimes \hat A+ \sum_{k=1}^m \hat N_k \otimes \hat N_k)  
\end{align}
hold. Then, zero initial conditions for both the full and the reduced system yield
\begin{align*}
  \sup_{t\geq 0}\left\|y(t)-\hat y(t)\right\|_2 \leq \left(\trace(\Sigma_2 K_{SPA})\right)^{\frac{1}{2}} \exp\left\{0.5\left\|u^{0}\right\|_{L^2}^2\right\} \left\|u\right\|_{L^2}                   
                    \end{align*}
with $\Sigma_2=\diag(\sigma_{r+1}, \ldots, \sigma_n)$ and the weighting matrix given by\begin{align*}
K_{SPA} =  & B_2 B_2^T-2(A_{22}P_{g, 2}+A_{21} P_{g, 1})(A_{22}^{-1} A_{21})^T\\
&+ 2\sum_{k=1}^m (N_{k, 22} P_{g, 2}+N_{k, 21}P_{g, 1})(N_{k, 21}-N_{k, 22}A_{22}^{-1} A_{21})^T \\
&- \sum_{k=1}^m (N_{k, 21}-N_{k, 22}A_{22}^{-1} A_{21}) \hat P  (N_{k, 21}-N_{k, 22}A_{22}^{-1} A_{21})^T,
\end{align*}
where $\hat P$ and $P_{g}=\smat P_{g, 1}\\ P_{g, 2}\srix$ satisfy (\ref{lyapeqred}) and (\ref{lyapmix}), respectively.
\end{thm}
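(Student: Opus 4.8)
The plan is to mirror the proof of Theorem \ref{theoremerrorboundBT} exactly, replacing the BT-specific reduction formulas with the SPA ones. First I would observe that the hypotheses (\ref{assumption}) together with (\ref{stabpreservcond}) guarantee that the reduced realization in (\ref{spamat}) satisfies (\ref{assumptionred}), i.e. mean square asymptotic stability is preserved under SPA; this is the analogue of the fact ``(\ref{assumption}) implies (\ref{assumptionred}) for BT'' used above, and I would cite the corresponding result for stochastic SPA (e.g.\ \cite{redmannstochbilspa,redSPA,redbendamm}) — the condition $0\notin\sigma(\hat A\otimes I+I\otimes\hat A+\sum_k\hat N_k\otimes\hat N_k)$ is precisely what one needs to rule out a zero eigenvalue so that the negative-real-part property transfers from the Schur complement structure. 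Once (\ref{assumptionred}) holds, Corollary \ref{corh2error} applies and gives
\begin{align*}
  \sup_{t\geq 0}\left\|y(t)-\hat y(t)\right\|_2 \leq \left(\trace(C P C^T) + \trace(\hat C \hat P \hat C^T)- 2 \trace(C P_g \hat C^T)\right)^{\frac{1}{2}} \exp\left\{0.5\left\|u^{0}\right\|_{L^2}^2\right\} \left\|u\right\|_{L^2},
\end{align*}
so the whole task reduces to an algebraic identity for the $\mathcal H_2$-error trace.

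Next I would invoke the stochastic-systems literature, where exactly this $\mathcal H_2$-error expression for balanced SPA has already been evaluated: by the SPA analogue of \cite[Proposition 4.6]{redmannbenner}, namely the corresponding proposition in \cite{redmannstochbilspa} (or \cite{redSPA}), one has
\begin{align*}
\trace(C P C^T) + \trace(\hat C \hat P \hat C^T)- 2 \trace(C P_g \hat C^T) = \trace(\Sigma_2 K_{SPA})
\end{align*}
with $K_{SPA}$ as in the statement. Combining this with the displayed bound yields the claim. If instead a self-contained derivation is wanted, I would proceed by hand: use $P=Q=\Sigma$ balanced, insert $\hat C=\bar C=C_1-C_2A_{22}^{-1}A_{21}$ and $\hat N_k=\bar N_k=N_{k,11}-N_{k,12}A_{22}^{-1}A_{21}$, partition $C=[C_1\ C_2]$, $C P C^T=C_1\Sigma_1 C_1^T+C_2\Sigma_2 C_2^T$, $P_g=\smat P_{g,1}\\ P_{g,2}\srix$, and repeatedly use the mixed Lyapunov equation (\ref{lyapmix}) — whose block rows read $A_{11}P_{g,1}+A_{12}P_{g,2}+P_{g,1}\hat A^T+\sum_k N_{k,11}P_{g,1}\hat N_k^T+N_{k,12}P_{g,2}\hat N_k^T=-B_1\hat B^T$ and $A_{21}P_{g,1}+A_{22}P_{g,2}+P_{g,2}\hat A^T+\sum_k N_{k,21}P_{g,1}\hat N_k^T+N_{k,22}P_{g,2}\hat N_k^T=-B_2\hat B^T$ — together with (\ref{lyapeqred}) for $\hat P$, to eliminate $C_1,C_2$ in favor of $\Sigma$ and collect the terms multiplying $\Sigma_2$.

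The main obstacle is the bookkeeping in that algebraic reduction: one must substitute the SPA output and bilinear matrices, then systematically use the second block row of (\ref{lyapmix}) (the one involving $A_{21}, A_{22}, N_{k,21}, N_{k,22}$) to replace combinations like $A_{22}P_{g,2}+A_{21}P_{g,1}$ by expressions in $B_2\hat B^T$, $P_{g,2}\hat A^T$ and the $N$-terms, and likewise exploit the balancing relation to turn $C_i\Sigma_j C_j^T$ contributions into traces against $\Sigma_2$. The SPA case is genuinely more involved than BT because of the extra $A_{22}^{-1}A_{21}$ factors appearing in $\bar A$, $\bar C$, $\bar N_k$, which is why the cross terms in $K_{SPA}$ carry the factor $(A_{22}^{-1}A_{21})^T$; one has to be careful that the $\hat A$-dependence cancels correctly despite $\hat A=\bar A$ now being a Schur complement rather than a simple block. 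I expect this to be the step requiring the most care, but it is routine linear algebra once the substitutions are organized, and it has precedent in the cited stochastic-SPA error analysis, so I would present it by reducing to that reference rather than reproducing every manipulation.
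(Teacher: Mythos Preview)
Your proposal is correct and follows essentially the same approach as the paper: first use (\ref{assumption}) together with (\ref{stabpreservcond}) to obtain (\ref{assumptionred}) for the SPA reduced system (the paper cites \cite{redSPA} for this), then apply Corollary \ref{corh2error}, and finally invoke the stochastic SPA error analysis (the paper uses \cite[Theorem 4.1]{redSPA}) for the trace identity $\trace(C P C^T) + \trace(\hat C \hat P \hat C^T)- 2 \trace(C P_g \hat C^T) = \trace(\Sigma_2 K_{SPA})$. Your optional self-contained derivation is extra detail the paper omits, but the main argument matches.
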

\begin{proof}
Assumptions (\ref{assumption}) and (\ref{stabpreservcond}) yield (\ref{assumptionred}) for SPA, see \cite{redSPA}. This guarantees existence of the bound in Corollary \ref{corh2error}. Consequently, we have \begin{align*}
  \sup_{t\geq 0}\left\|y(t) - \hat y(t)\right\|_2 \leq \left(\trace(C P C^T) + \trace(\hat C \hat P \hat C^T)- 2 \trace(C P_g \hat C^T)\right)^{\frac{1}{2}} \exp\left\{0.5\left\|u^{0}\right\|_{L^2}^2\right\} \left\|u\right\|_{L^2}.                   
                    \end{align*}
The above term is known for SPA applied to stochastic systems. By \cite[Theorem 4.1]{redSPA}, we have \begin{align*}
 \trace(C P C^T) + \trace(\hat C \hat P \hat C^T)- 2 \trace(C P_g \hat C^T) = \trace(\Sigma_2 K_{SPA}).                                                                                                                                                                                   
                                                                                                                                                                                   \end{align*}
                   \end{proof}\\
Theorems \ref{theoremerrorboundBT} and \ref{theoremerrorboundSPA} show us in which cases BT and SPA work well. If one only truncates the states corresponding to the small Hankel singular values (hard to reach and observe states),
then the diagonal entries $\sigma_{r+1}, \ldots, \sigma_n$ of $\Sigma_2$ and hence the output error are small assuming that the control is not too large.  
\begin{remark} 
Based on the results in \cite{BTtyp2EB} and \cite{redmannspa2} Theorems \ref{theoremerrorboundBT} and \ref{theoremerrorboundSPA} can be formulated the same way if $P$ is replaced by $P_2$ satisfying (\ref{reachlyaptype2}). 
In this type II case,(\ref{assumptionred}) automatically follows from (\ref{assumption}) for BT and SPA due to \cite{bennerdammcruz} and \cite{redmannspa2}. The reason why (\ref{stabpreservcond}) has to be assumed above is that 
stability preservation for SPA based on $P$ is still an open question.
\end{remark}

\section{Numerical experiments}

In this section, we conduct numerical experiments in order to estimate the quality of the bounds in Theorem \ref{mainthm} and the associated Corollary \ref{corh2error}. We investigate the influence of the control 
energy on the sharpness of the result in Theorem \ref{mainthm}. Subsequently, we apply the bound in Corollary \ref{corh2error} in the context of MOR and show which impact a rescaling factor $\gamma$ has that we 
discussed in Remark \ref{remark1}.
\subsection{The output bound depending on the control energy}\label{sec61}

In the proof of Theorem \ref{mainthm}, the Gronwall Lemma \ref{lem3} is involved which is not generally tight. 
In particular one step in this proof is to bound $x_{(s)}(t, b_j, 0)x^T_{(s)}(t, b_j, 0)$ which satisfies (\ref{matrix_ineq}) by the 
solution of (\ref{matrix_eq}) from above. Here, $b_j$ denotes the $j$th column of the input matrix $B$.
It can be expected that the smaller the gap between the left and the right side of inequality (\ref{matrix_ineq}), the better the Gronwall estimate. In the last step in Appendix \ref{secprooflem1}, we can see that the gap is exactly 
\begin{align}\label{gap_to_eq}
\sum_{k=1}^m \left(N_k - u^0_k(t)I\right) x_{(s)}(t, b_j, 0)x^T_{(s)}(t, b_j, 0) \left(N_k - u^0_k(t) I\right)^T. 
\end{align}
By assumption (\ref{assumption}), $N_k$ is usually small meaning that (\ref{gap_to_eq}) is expected to be small if the control vector $u^0$ is not too large which furthermore implies smallness of $x_{(s)}(t, b_j, 0)$. We can also 
see the impact of $B$ because it influences $x_{(s)}(t, b_j, 0)$ and hence also (\ref{gap_to_eq}). \smallskip

This intuition is confirmed by the following numerical example \begin{align*}
\dot x(t) &= Ax(t) +  b_1 u_1(t)+ b_2 u_2(t) + N_1 x(t) u_1(t),\quad x(0) = 0,\\
y(t) &= Cx(t),                                                              
                                                               \end{align*}
where we choose \begin{align*}
A=    \mat{cc}-2  & 1\\   1 & -2\rix, \quad N_1= \mat{cc}0  & 1\\   0.5 & 0\rix, \quad B= \mat{cc}1  & 0\\   0 & 1\rix, \quad C=    \mat{cc}  1 &   1 \rix.
                \end{align*}
This example satisfies (\ref{assumption}) and we control the above system on $[0, 1]$ meaning that
\begin{align}\label{control_numerics}
 u(t) = (u_1(t)\ u_2(t))^T = \alpha\, \bar u(t)/\left\|\bar u\right\|_{L^2}, \quad \bar u(t) = \begin{cases} (\expn^{-t} \ \sin(\pi t) \expn^t)^T & \text{if } t\in [0, 1]\\ (0\ 0)^T & t>1. \end{cases}   
\end{align}
We compute $\sup_{t\in[0, T]} y(t)$, $T\geq 1$, and the corresponding bound from Theorem \ref{mainthm} for several $\alpha= \left\|u\right\|_{L^2}\in[0, 4]$, where we set
$f(u): = \exp\left\{0.5\left\|u^{0}\right\|_{L^2}^2\right\} \left\|u\right\|_{L^2}$. Moreover, notice that $\left\|u^{0}\right\|_{L^2} = \left\|u_1\right\|_{L^2} \approx 0.48 \alpha$. 
In Figure \ref{alpha12}, we see that the bound is a very good approximation if $\left\|u\right\|_{L^2}\leq 2$ and it performs best around a control energy of around $0.5$. In Figure \ref{alpha34}, we observe 
that with increasing energy, the bound gets less sharp but it is still acceptable if $\left\|u\right\|_{L^2}\leq 4$. However, the tendency of the graph in Figure \ref{alpha34} already indicates that 
the bound from Theorem \ref{mainthm} is not accurate for large controls.\smallskip

\begin{figure}[ht]
\begin{minipage}{0.45\linewidth}
 \hspace{-0.5cm}
\includegraphics[width=1.1\textwidth]{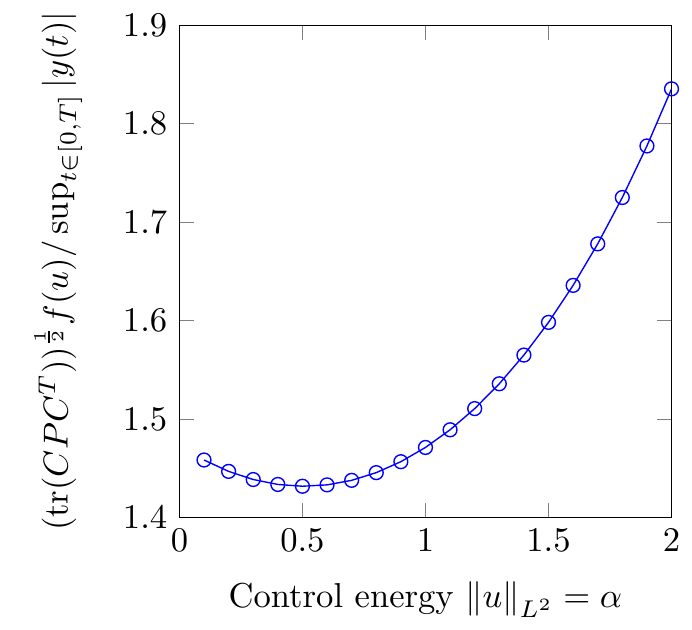}
\caption{Bound $(\trace(C P C^T))^{\frac{1}{2}} f(u)$ divided by $\sup_{t\in [0, T]} \vert y(t)\vert$ for $u$ defined in (\ref{control_numerics}), control energies $\alpha\in [0, 2]$ and $T=2$.}\label{alpha12}
\end{minipage}\hspace{0.5cm}
\begin{minipage}{0.45\linewidth}
 \hspace{-0.5cm}
\includegraphics[width=1.1\textwidth]{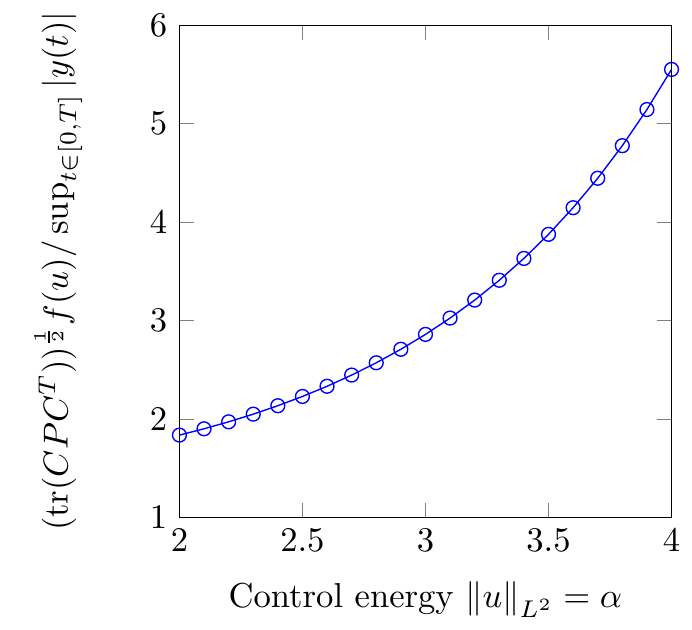}
\caption{Bound $(\trace(C P C^T))^{\frac{1}{2}} f(u)$ divided by $\sup_{t\in [0, T]} \vert y(t)\vert$ for $u$ defined in (\ref{control_numerics}), control energies $\alpha\in [2, 4]$  and $T=2$.}\label{alpha34}
\end{minipage}
\end{figure}
An immediate improvement of the bound (for large $u$) can be seen by considering the time-limited Gramian $P_t$, defined above (\ref{lastest}), instead of $P=\lim_{t\rightarrow \infty} P_t$, since $P_t\leq P$. The associated estimate is stated in 
(\ref{lastest}). Moreover, $P_t$ neither needs (\ref{assumption}) nor $\sigma(A)\subset \mathbb C_-$ for its existence which also avoids a rescaling with a constant $\gamma$, see Remark \ref{remark1}. However, how to compute 
$P_t$ for large $n$, is an open question and rather challenging.\smallskip

Another solution could be truncated Gramians which lead to truncated $\mathcal{H}_2$-norms \cite{bengoygug, flagg2015multipoint}. Their benefit is that they can be computed more easily than $P$ and they only require 
$\sigma(A)\subset \mathbb C_-$ but not (\ref{assumption}) for their existence which avoids scaling the systems with $\gamma$. However, it is not yet clear how to derive an output bound based on truncated Gramians. If it exists, it requires 
different techniques than used here. We address the influence of the scaling factor $\gamma$ in the next section.

\subsection{The output bound in the context of MOR}

We apply BT explained in Section \ref{sectionBT_SPA} to a modified version of a heat transfer problem considered in \cite{morBenD11}. In particular, we reduce the dimension 
of a spatially discretized heat equation and compute the error bound of Theorem \ref{theoremerrorboundBT} by the representation given in Corollary \ref{corh2error}. Notice that within this example, a rescaling of the 
resulting bilinear system by a constant $\gamma$ is needed, see Remark \ref{remark1}, in order to meet the assumptions for the existence of the error bound. We investigate the performance of the bound depending on the 
rescaling factor below.\smallskip

Let us study the following heat equation on $[0, 1]^2$:
\begin{align*}
\frac{\partial}{\partial t} X(t,\zeta)=\Delta X(t, \zeta),\quad \zeta\in [0, 1]^2,\quad t\in [0, T],
\end{align*}
with mixed Dirichlet and Robin boundary conditions \begin{align*}
  \frac{\partial}{\partial \mathbf{n}} X(t,\zeta)&= u_1(t) (X(t, \zeta) -1)\quad \text{on}\quad \Gamma_1:=\left\{0\right\}\times (0, 1),    \\    
  X(t,\zeta)&= u_2(t)\quad \text{on}\quad \Gamma_2:=\left\{1\right\}\times (0, 1),    \\
   X(t,\zeta)&= 0 \quad \text{on}\quad \partial [0, 1]^2\setminus (\Gamma_1 \cup \Gamma_2),\quad t\in [0, T],
                                                   \end{align*}
where $u = (u_1(t)\ u_2(t))^T$ denotes the input to the system. Like in \cite{morBenD11} we discretize the heat equation with a finite difference scheme on an equidistant
$\tilde n \times \tilde n$-mesh. This leads to an $n=\tilde n^2$-dimensional bilinear system 
\begin{equation}\label{bilisysnum}
 \begin{split}
             \dot x(t) &=Ax(t)+Bu(t)+N_1 x(t)u_1(t),\\ 
             y(t)&=Cx(t), \quad t\in [0, T],
            \end{split}
                       \end{equation}
where $N_2 = 0$ and $C= \frac{1}{n} [\begin{matrix}1& 1& \dots & 1\end{matrix}]$, i.e., the quantity of interest is the average temperature. We refer to \cite{morBenD11} for more details on the matrices $A, B$ and $N_1$. \smallskip

We choose $n=900$ ($\tilde n = 30$) and observe that (\ref{bilisysnum}) satisfies $\sigma(A) \subset \C_{-}$ but not (\ref{assumption}). 
We can equivalently rewrite (\ref{bilisysnum}) as in (\ref{rescaled_sys}) and choose $\gamma\geq 1.3$ to achieve (\ref{scalestab}). 
Now, BT is applied based on the rescaled control $u\mapsto \gamma u$ and the matrices $(B, N_1)\mapsto \frac{1}{\gamma}(B, N_1)$ in order to derive the reduced system (\ref{sys:reduced}) with state space
dimension $r=10$. Subsequently, we compute the 
error bound in Corollary \ref{corh2error}, where the trace expression there is denoted by $\mathcal E_\gamma$ such that the result in this corollary is written as \begin{align*}
       \sup_{t\in [0, T]}\vert y(t) - \hat y(t)\vert \leq \mathcal E_\gamma f(\gamma u).                                                                                                                                                           
                                                                                                                                                                   \end{align*}
Notice that the reduced system output $\hat y$ also depends on $\gamma$. In Figure \ref{error_bound_num}, we see that both the error and the bound decrease in $\gamma\in [1.3, 3]$, 
where the control in (\ref{control_numerics}) with $\alpha=1$ is used. 
As anticipated before, the bound is relatively tight for 
small $\gamma$, whereas it is loosing accuracy if $\gamma$ is larger. Further numerical simulations show that the bound can perform very well if no rescaling is needed and the control energy is not too large but it is also acceptable 
in the example considered here if $\gamma$ is not much larger than $2$, see Figure \ref{error_bound_num2}. Furthermore, notice that we have not investigated the cases of $\gamma>3$ because these choices do not really improve
the error in the model reduction procedure and for too large $\gamma$ it is even increasing again.\smallskip

As already mentioned in Section \ref{sec61}, time-limited or truncated Gramians avoid a rescaling by $\gamma$ and can hence potentially lead to an improved output bound.
\begin{figure}[ht]
\begin{minipage}{0.45\linewidth}
 \hspace{-0.5cm}
\includegraphics[width=1.1\textwidth]{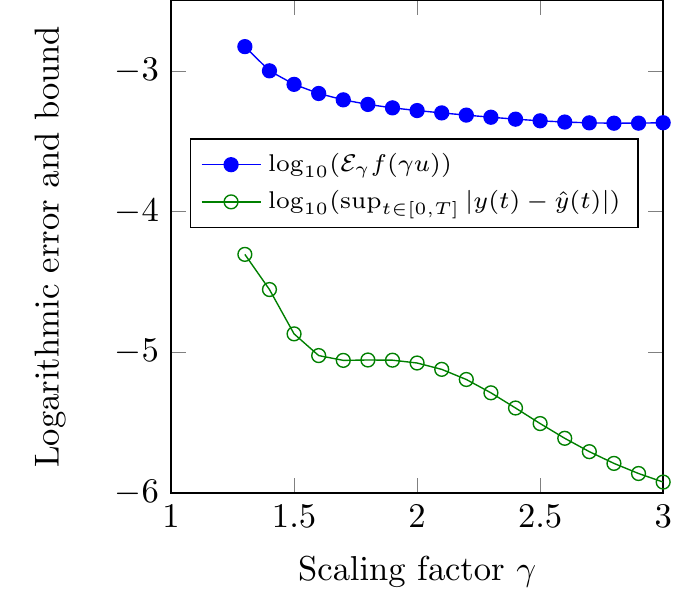}
\caption{Reduction error $\sup_{t\in [0, T]} \vert y(t) - \hat y(t)\vert$ for $r=10$, $u$ defined in (\ref{control_numerics}) and $\alpha =1$ compared with the bound 
in Corollary \ref{corh2error} for $\gamma\in [1.3, 3]$ and $T=2$.}\label{error_bound_num}
\end{minipage}\hspace{0.5cm}
\begin{minipage}{0.45\linewidth}
 \hspace{-0.5cm}
\includegraphics[width=1.1\textwidth]{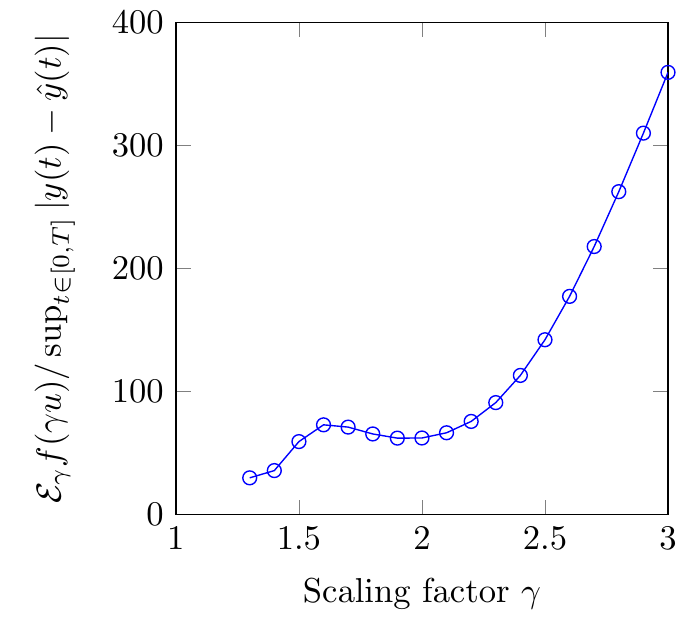}
\caption{Bound $\mathcal E_\gamma f(\gamma u)$ divided by reduction error $\sup_{t\in [0, T]} \vert y(t) - \hat y(t)\vert$ with the same parameters as in Figure \ref{error_bound_num}.}\label{error_bound_num2}
\end{minipage}
\end{figure}

\section{Conclusions}

In this paper, we studied bilinear systems in terms of asymptotic stability in the state equation and boundedness in the output.
Furthermore, we characterized reachability within bilinear equations. Moreover, we found a bound for the output errors of bilinear systems based on 
the $\mathcal H_2$-error. This error bound could finally explain why $\mathcal H_2$-optimal model order reduction techniques lead to good approximations. Such a link has been an open question for quite some time.
Subsequently, error bounds for balanced truncation and singular perturbation approximation could be derived. These bounds are the first ones for this
type of balancing related schemes considered here and they can tell us in which cases the methods perform well.

\section*{Acknowledgments}
The author thanks Tobias Damm (University of Kaiserslautern) for the fruitful discussion around the proof of Lemma \ref{lem3}. The author would also like to thank the anonymous
reviewers for their helpful, constructive and detailed comments that greatly contributed to improving the paper. 

\appendix

\section{Resolvent positive operators}\label{appendixbla}

Let $\left(H^n, \langle \cdot, \cdot\rangle_F\right)$ be the Hilbert space of symmetric $n\times n$ matrices and let $H^n_+$ be the subset of symmetric positive semidefinite matrices. We now define positive and 
resolvent positive operators on $H^n$.
\begin{defn}
A linear operator $L: H^n\rightarrow H^n$ is called positive if $L(H^n_+)\subset H^n_+$. It is resolvent positive if there is an $\alpha_0\in\mathbb R$ such that for all $\alpha>\alpha_0$ the operator $(\alpha I - L)^{-1}$ is positive.
\end{defn}\\
The operator $\mathcal L(X):=AX + X A^T$ is resolvent positive and $\Pi(X):=\sum_{k=1}^m N_k X N_k^T$ is positive for $A, N_k\in\mathbb R^{n\times n}$. This implies that the generalized Lyapunov operator $\mathcal L + \Pi$ is resolvent positive. We refer to 
\cite{damm} for a more detailed discussion and a proof. We now state an equivalent characterization for resolvent positive operators in the following. It can be found in a more general form in \cite{damm, elsner, schneidervid}.
\begin{thm}\label{equiresolpos}
A linear operator $L: H^n\rightarrow H^n$ is resolvent positive if and only if $\langle V_1, V_2\rangle_F=0$ implies $\langle L V_1, V_2\rangle_F\geq 0$ for $V_1, V_2\in H^n_+$.
\end{thm}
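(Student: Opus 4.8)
The plan is to prove the two implications separately, using two standard facts about the cone $H^n_+\subset H^n$: it is \emph{self-dual}, i.e.\ $V_2\in H^n_+$ if and only if $\langle V_1,V_2\rangle_F\geq 0$ for all $V_1\in H^n_+$; and a matrix $V\in H^n_+$ lies on the boundary of $H^n_+$ exactly when $V\xi=0$ for some $\xi\neq 0$, in which case $W:=\xi\xi^T\in H^n_+$ satisfies $\langle V,W\rangle_F=\xi^TV\xi=0$ and is (up to sign) a supporting functional at $V$.

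\emph{Resolvent positive $\Rightarrow$ cross-positive.} Fix $V_1,V_2\in H^n_+$ with $\langle V_1,V_2\rangle_F=0$ and pick $\alpha\in\R$ larger than both $\alpha_0$ and the spectral radius of $L$, so that the Neumann series $(\alpha I-L)^{-1}=\sum_{j\geq 0}\alpha^{-(j+1)}L^j$ converges. Since $(\alpha I-L)^{-1}$ is positive, $(\alpha I-L)^{-1}V_1\in H^n_+$, and self-duality of the cone gives
\[
0\leq\big\langle(\alpha I-L)^{-1}V_1,V_2\big\rangle_F=\frac{\langle V_1,V_2\rangle_F}{\alpha}+\frac{\langle LV_1,V_2\rangle_F}{\alpha^{2}}+\sum_{j\geq 2}\frac{\langle L^{j}V_1,V_2\rangle_F}{\alpha^{j+1}}.
\]
Multiplying by $\alpha^2$, using $\langle V_1,V_2\rangle_F=0$, and letting $\alpha\to\infty$ yields $\langle LV_1,V_2\rangle_F\geq 0$.

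\emph{Cross-positive $\Rightarrow$ resolvent positive.} I would argue in two steps. Step (i): the cross-positivity hypothesis implies $\expn^{tL}(H^n_+)\subseteq H^n_+$ for all $t\geq 0$. To see this, first pass to the \emph{strictly} cross-positive operator $L_\varepsilon:=L+\varepsilon M$, where $M(X):=(\trace X)I$ and $\varepsilon>0$; indeed $\langle MV_1,V_2\rangle_F=(\trace V_1)(\trace V_2)>0$ for $V_1,V_2\in H^n_+\setminus\{0\}$, so $\langle L_\varepsilon V_1,V_2\rangle_F>0$ whenever $\langle V_1,V_2\rangle_F=0$ with $V_1,V_2\in H^n_+\setminus\{0\}$. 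Now take $V_0>0$ and let $V(t):=\expn^{tL_\varepsilon}V_0$, the solution of $\dot V=L_\varepsilon V$, which never vanishes by uniqueness. If $V$ ever left $H^n_+$, then $t^\ast:=\sup\{\tau\geq 0:V(s)\in H^n_+\ \forall s\in[0,\tau]\}$ would be finite and positive, and $V(t^\ast)$ would lie on $\partial H^n_+$ (closedness of $H^n_+$ and openness of its interior), hence $V(t^\ast)\xi=0$ for some $\xi\neq 0$. Then $g(t):=\xi^TV(t)\xi=\langle V(t),\xi\xi^T\rangle_F$ is nonnegative on $[0,t^\ast]$ with $g(t^\ast)=0$, forcing $g'(t^\ast)\leq 0$; but $g'(t^\ast)=\langle L_\varepsilon V(t^\ast),\xi\xi^T\rangle_F>0$ by strict cross-positivity (both $V(t^\ast)$ and $\xi\xi^T$ are nonzero elements of $H^n_+$, orthogonal to each other), a contradiction. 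Hence $\expn^{tL_\varepsilon}V_0\in H^n_+$ for all $t\geq 0$ and all $V_0>0$, and then for all $V_0\in H^n_+$ by density of the positive definite matrices in $H^n_+$ and closedness of $H^n_+$. Letting $\varepsilon\downarrow 0$ gives $L_\varepsilon\to L$, hence $\expn^{tL_\varepsilon}\to\expn^{tL}$, and a limit of operators mapping $H^n_+$ into itself has the same property; this is step (i).

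Step (ii): let $\alpha_0$ now denote the spectral abscissa of $L$. For $\alpha>\alpha_0$ one has the Laplace representation $(\alpha I-L)^{-1}=\int_0^\infty\expn^{-\alpha t}\expn^{tL}\,dt$, the integral converging in norm. For $V\in H^n_+$ the integrand $\expn^{-\alpha t}\expn^{tL}V$ takes values in the closed convex cone $H^n_+$ by step (i), so $(\alpha I-L)^{-1}V\in H^n_+$; thus $(\alpha I-L)^{-1}$ is positive for every $\alpha>\alpha_0$, which is exactly resolvent positivity. The step I expect to be the real obstacle is (i), namely forcing the trajectory $V(t)$ to remain in the cone. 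The device that makes it elementary is the perturbation $L_\varepsilon$: it turns the non-strict subtangential condition supplied by the hypothesis into a strict one, so that the sign of $g'(t^\ast)$ alone settles the matter and no general invariance (Nagumo-type) theorem is needed.
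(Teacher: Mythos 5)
Your proof is correct. Note first that the paper does not actually prove Theorem \ref{equiresolpos}: it only records the statement and defers to the references (Damm; Elsner; Schneider--Vidyasagar), so the comparison is with the classical argument rather than with anything in the text. Your write-up is a faithful, self-contained rendition of that classical proof specialized to the cone $H^n_+$: the forward direction via the Neumann series of $(\alpha I-L)^{-1}$ together with self-duality of $H^n_+$ (multiply by $\alpha^2$, kill the $\langle V_1,V_2\rangle_F$ term, let $\alpha\to\infty$), and the converse via invariance of $H^n_+$ under $\expn^{tL}$ followed by the Laplace representation $(\alpha I-L)^{-1}=\int_0^\infty \expn^{-\alpha t}\expn^{tL}\,dt$ for $\alpha$ above the spectral abscissa. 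The one step that genuinely needs care --- forcing the trajectory to stay in the cone --- is handled correctly: the strictly cross-positive perturbation $L_\varepsilon=L+\varepsilon(\trace\cdot)I$ makes the derivative of $g(t)=\xi^TV(t)\xi$ strictly positive at a first boundary contact, where $\xi$ spans the kernel of $V(t^\ast)$, contradicting $g\geq 0$ on $[0,t^\ast]$ with $g(t^\ast)=0$; the passage from $V_0>0$ to $V_0\in H^n_+$ and from $\varepsilon>0$ to $\varepsilon=0$ by closedness of the cone is also fine. It is worth observing that this first-exit-time device with an $\varepsilon$-perturbation is exactly the mechanism the paper itself uses in its proof of the Gronwall Lemma \ref{lem3} (Appendix \ref{secprooflem3}), where Theorem \ref{equiresolpos} is invoked in the opposite direction; your argument thus makes the appendix essentially self-contained, at the cost of re-deriving standard facts (self-duality of the semidefinite cone, the boundary characterization via singular matrices, and the resolvent--semigroup Laplace identity) that the cited literature supplies in greater generality for arbitrary proper cones.
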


\section{Pending proofs}
Notice that Lemma \ref{lem3} is proved for initial time $s=0$ for simplicity of the notation. The proof is completely analogous for general initial times. 
Moreover, we write $x(t)$ instead of $x_{(s)}(t, x_0, 0)$ to shorten the notation within the proof of Lemma \ref{lem1}. 
\subsection{Proof of Lemma \ref{lem1}}\label{secprooflem1}

We apply the product rule and insert (\ref{hom_bil_eq}). Hence, we obtain
\begin{align*}
\frac{d}{dt} x(t) x^T(t) &= [\frac{d}{dt} x(t)] x^T(t) + x(t) [\frac{d}{dt} x^T(t)] \\
&= \left(Ax(t) + \sum_{k=1}^m N_k x(t) u_k(t)\right)x^T(t) + x(t)\left(x^T(t) A^T + \sum_{k=1}^m x^T(t) N_k^T u_k(t)\right)\\
&= Ax(t) x^T(t) + x(t) x^T(t) A^T + \sum_{k=1}^m\left[N_k x(t)x^T(t) u^0_k(t)+ x(t)x^T(t) N_k^T u^0_k(t)\right]\\
&= Ax(t) x^T(t) + x(t) x^T(t) A^T \\&\quad+ \sum_{k=1}^m\left[N_k x(t)x^T(t)N_k^T + x(t)x^T(t) (u^0_k(t))^2-\left(N_k - u^0_k(t)I\right) x(t)x^T(t) \left(N_k - u^0_k(t) I\right)^T\right]\\
&\leq Ax(t) x^T(t) + x(t) x^T(t) A^T + \sum_{k=1}^m\left[N_k x(t)x^T(t)N_k^T\right] + x(t)x^T(t) \left\|u^{0}(t)\right\|_2^2.
\end{align*}
This leads to the claim of this lemma.

\subsection{Proof of Lemma \ref{lem3}}\label{secprooflem3}

We set $Y:= Z-X$ and $L(Y(t)):= A Y(t) + Y(t) A^T +\sum_{k=1}^m N_k Y(t) N_k^T + Y(t) \left\|u^{0}(t)\right\|_2^2$. We subtract (\ref{matrix_ineq}) from (\ref{matrix_eq}) and obtain 
\begin{align*}
\dot Y(t) \geq L(Y(t)).
\end{align*}
We define the difference function $D(t) := \dot Y(t) - L(Y(t)) \geq 0$ and consider the following perturbed differential equation 
\begin{align*}
\dot Y_\epsilon (t) = L(Y_\epsilon(t)) + D(t) + \epsilon I
\end{align*}
with parameter $\epsilon\geq 0$ and initial state $Y_\epsilon (0)= Y(0) + \epsilon I$. We see that $Y_0(t)= Y(t)$ for all $t\geq 0$ since $Y_0 - Y$ solves the differential equation 
$\dot {\tilde Y} (t) = L(\tilde Y(t))$ with zero initial state. Since $Y_\epsilon$ continuously depends on $\epsilon$ and the initial data, we have $\lim_{\epsilon \rightarrow 0} Y_\epsilon(t) = Y_0(t) = Y(t)$ for all $t\geq 0$.
\smallskip

Let us now assume that $Y_\epsilon$ is not positive definite for $\epsilon>0$. Then, there exist a $\tilde v\ne 0$ and a $\tilde t>0$ such that $\tilde v^T Y_\epsilon(\tilde t) \tilde v \leq 0$.
We know that $f_\epsilon(v, t):= v^T Y_\epsilon(t)  v$ is positive at $t=0$ for all $v\in\mathbb R^n\setminus \{0\}$ by assumption. Since $f_\epsilon$ is non-positive in some point $(\tilde v, \tilde t)$
and due to the continuity of $t\mapsto Y_\epsilon(t)$, there is a point $t_0\in (0, \tilde t]$ for which \begin{align}\label{contradicttionprop}
v_0^T Y_\epsilon(t_0) v_0 = 0 \quad \text{and} \quad v_0^T Y_\epsilon(t) v_0 > 0,\quad t<t_0,
\end{align}
for some $v_0\ne 0$, whereas $v^T Y_\epsilon(t_0) v\geq 0$ for all other $v\in\mathbb R^n$. $L$ is a generalized Lyapunov operator and hence resolvent positive (see Appendix \ref{appendixbla}).
The identity $0 = v_0^T Y_\epsilon(t_0) v_0 = \langle Y_\epsilon(t_0), v_0v_0^T\rangle_F$, by Theorem \ref{equiresolpos}, then implies $0\leq \langle L(Y_\epsilon(t_0)), v_0v_0^T\rangle_F = v_0^T L(Y_\epsilon(t_0))v_0$.
Using these facts, we have 
\begin{align*}
\frac{d}{dt} v_0^T Y_\epsilon(t_0) v_0 = v_0^T L(Y_\epsilon(t_0)) v_0 +v_0^T D(t_0) v_0 + \epsilon \left\|v_0\right\|_2^2 > 0.
\end{align*}
Consequently, we know that there are $t<t_0$ close to $t_0$ for which $v_0^T Y_\epsilon(t) v_0<0$. This contradicts (\ref{contradicttionprop}) and hence our assumption is wrong such that 
$Y_\epsilon(t)$ is positive definite for all $t\geq 0$ and $\epsilon>0$. Taking the limit of $\epsilon \rightarrow 0$, we obtain $Y(t)\geq 0$ for all $t\geq 0$ which concludes the proof.

\bibliographystyle{plain}

\begin{thebibliography}{10}

\bibitem{typeIBT}
S.~A. Al-Baiyat and M.~Bettayeb.
\newblock {A new model reduction scheme for k--power bilinear systems}.
\newblock {\em Proceedings of the 32nd IEEE Conference on Decision and
  Control}, pages 22--27, 1993.

\bibitem{morAnt05}
A.~C. Antoulas.
\newblock {\em Approximation of Large-Scale Dynamical Systems}, volume~6 of
  {\em Adv. Des. Control}.
\newblock {SIAM} Publications, Philadelphia, PA, 2005.

\bibitem{beckerhartmann}
S.~Becker and C.~Hartmann.
\newblock {Infinite-dimensional bilinear and stochastic balanced truncation
  with explicit error bounds}.
\newblock {\em Mathematics of Control, Signals, and Systems}, 31(2):1--37,
  2019.

\bibitem{morBenB12b}
P.~Benner and T.~Breiten.
\newblock Interpolation-based $\mathcal{H}_2$-model reduction of bilinear
  control systems.
\newblock {\em {SIAM} J. Matrix Anal. Appl.}, 33(3):859--885, 2012.

\bibitem{morBenD11}
P.~Benner and T.~Damm.
\newblock Lyapunov equations, energy functionals, and model order reduction of
  bilinear and stochastic systems.
\newblock {\em {SIAM} J. Cont. Optim.}, 49(2):686--711, 2011.

\bibitem{redbendamm}
P.~Benner, T.~Damm, M.~Redmann, and Y.~R. Rodriguez~Cruz.
\newblock {Positive Operators and Stable Truncation.}
\newblock {\em {Linear Algebra and its Applications}}, 498:74--87, 2016.

\bibitem{bennerdammcruz}
P.~Benner, T.~Damm, and Y.~R. Rodriguez~Cruz.
\newblock {Dual pairs of generalized Lyapunov inequalities and balanced
  truncation of stochastic linear systems}.
\newblock {\em IEEE Trans. Autom. Contr.}, 62(2):782--791, 2017.

\bibitem{bengoygug}
P.~{Benner}, P.~{Goyal}, and S.~{Gugercin}.
\newblock {$\mathcal H_2$-Quasi-Optimal Model Order Reduction for Quadratic-Bilinear Control Systems.}
\newblock {\em {SIAM J. Matrix Anal. Appl.}}, 39(2):983--1032, 2018.

\bibitem{redmannbenner}
P.~{Benner} and M.~{Redmann}.
\newblock {Model Reduction for Stochastic Systems.}
\newblock {\em {Stoch PDE: Anal Comp}}, 3(3):291--338, 2015.

\bibitem{brunietal}
C.~Bruni, G.~DiPillo, and G.~Koch.
\newblock {On the mathematical models of bilinear systems}.
\newblock {\em Automatica}, 2(1):11--26, 1971.

\bibitem{morcondon2005}
M.~Condon and R.~Ivanov.
\newblock Nonlinear systems-algebraic gramians and model reduction.
\newblock {\em COMPEL}, 24(1):202--219, 2005.

\bibitem{damm}
T.~Damm.
\newblock {\em {Rational Matrix Equations in Stochastic Control.}}
\newblock {Lecture Notes in Control and Information Sciences 297. Berlin:
  Springer}, 2004.

\bibitem{elsner}
L.~Elsner.
\newblock {Quasimonotonie und Ungleichungen in halbgeordneten R\"aumen}.
\newblock {\em Linear Algebra and Its Applications}, 8(3):249--261, 1974.

\bibitem{flagg2015multipoint}
G.~Flagg and S.~Gugercin.
\newblock Multipoint {V}olterra series interpolation and $\mathcal{H}_2$
  optimal model reduction of bilinear systems.
\newblock {\em {SIAM} J. Matrix Anal. Appl.}, 36(2):549--579, 2015.

\bibitem{enefungray98}
W.~S. Gray and J.~Mesko.
\newblock Energy functions and algebraic {G}ramians for bilinear systems.
\newblock In {\em Preprints of the 4th IFAC Nonlinear Control Systems Design
  Symposium}, pages 103--108, Enschede, The Netherlands, 1998.

\bibitem{morGugAB08}
S.~Gugercin, A.~C. Antoulas, and C.~A. Beattie.
\newblock {$\mathcal{H}_2$} model reduction for large-scale dynamical systems.
\newblock {\em {SIAM} J. Matrix Anal. Appl.}, 30(2):609--638, 2008.

\bibitem{hartmann}
C.~{Hartmann}, B.~{Sch\"afer-Bung}, and A.~{Th\"ons-Zueva}.
\newblock {Balanced averaging of bilinear systems with applications to
  stochastic control.}
\newblock {\em {SIAM J. Control Optim.}}, 51(3):2356--2378, 2013.

\bibitem{haus}
U.~G. Haussmann.
\newblock {Asymptotic Stability of the Linear Ito Equation in Infinite
  Dimensions.}
\newblock {\em J. Math. Anal. Appl.}, 65:219--235, 1978.

\bibitem{staboriginal}
R.~Z. Khasminskii.
\newblock {Stochastic stability of differential equations.}
\newblock {Monographs and Textbooks on Mechanics of Solids and Fluids.
  Mechanics: Analysis, 7. Alphen aan den Rijn, The Netherlands; Rockville,
  Maryland, USA: Sijthoff \& Noordhoff.}, 1980.

\bibitem{Mohler}
R.~R. Mohler.
\newblock {\em {Bilinear Control Processes}}.
\newblock Academic Press, New York, 1973.

\bibitem{redstochbil}
M.~Redmann.
\newblock {Energy estimates and model order reduction for stochastic bilinear
  systems}.
\newblock {\em International Journal of Control}, 2018.

\bibitem{redmanntypeiibilinear}
M.~Redmann.
\newblock {Type II balanced truncation for deterministic bilinear control
  systems}.
\newblock {\em {SIAM J. Control Optim.}}, 56(4):2593--2612, 2018.

\bibitem{redmannspa2}
M.~Redmann.
\newblock {Type II singular perturbation approximation for linear systems with
  Lévy noise}.
\newblock {\em SIAM J. Control Optim.}, 56(3):2120--2158, 2018.

\bibitem{redmannstochbilspa}
M.~Redmann.
\newblock {A new type of singular perturbation approximation for stochastic
  bilinear systems}.
\newblock {\em Math. Control Signals Syst.}, 32(2):129--156, 2020.

\bibitem{BTtyp2EB}
M.~Redmann and P.~Benner.
\newblock {An $H_2$-Type Error Bound for Balancing-Related Model Order
  Reduction of Linear Systems with L{\'e}vy Noise}.
\newblock {\em Systems and Control Letters}, 105:1--5, 2017.

\bibitem{redSPA}
M.~Redmann and P.~Benner.
\newblock {Singular Perturbation Approximation for Linear Systems with L\'evy
  Noise}.
\newblock {\em Stochastics and Dynamics}, 18(4), 2018.

\bibitem{rugh1981nonlinear}
W.~J. Rugh.
\newblock {\em Nonlinear System Theory}.
\newblock The Johns Hopkins University Press, Baltimore, MD, 1981.

\bibitem{schneidervid}
H.~Schneider and M.~Vidyasagar.
\newblock {Cross-Positive Matrices}.
\newblock {\em SIAM Journal on Numerical Analysis}, 7(4):508--519, 1970.

\bibitem{sontag}
E.~D.~Sontag.
\newblock {Comments on integral variants of ISS}.
\newblock {\em Systems and Control Letters}, 34(1--2):93--100, 1998.

\bibitem{wonham}
W.~M.~Wonham.
\newblock On a Matrix Riccati Equation of Stochastic Control.
\newblock {\em SIAM Journal on Control}, 6(4):681--697, 1968.

\bibitem{morZhaL02}
L.~Zhang and J.~Lam.
\newblock On {$H_2$} model reduction of bilinear systems.
\newblock {\em Automatica}, 38(2):205--216, 2002.

\end{thebibliography}

\end{document}